\newcommand\figcaption{\def\@captype{figure}\caption}
\newcommand\tabcaption{\def\@captype{table}\caption}
\def\tb{\textcolor{black}}
\newcommand{\pa}{\partial}
\newcommand{\maT}{\mathcal T}
\newcommand{\be}{\begin{eqnarray}}
\newcommand{\ee}{\end{eqnarray}}
\newcommand{\ben}{\begin{eqnarray*}}
\newcommand{\een}{\end{eqnarray*}}
\def\be{\begin{equation}}
\def\ee{\end{equation}}
\def\bes{\begin{equation*}}
\def\ees{\end{equation*}}
\def\beq{\begin{eqnarray}}
\def\eeq{\end{eqnarray}}
\def\beqs{\begin{eqnarray*}}
\def\eeqs{\end{eqnarray*}}
\def\bal{\begin{aligned}}
\def\eal{\end{aligned}}
\def\bsqs{\begin{subequations}}
\def\esqs{\end{subequations}}
\newcommand{\maK}{\mathcal K}
\newtheorem{theorem}{Theorem}[section]
\newtheorem{lemma}[theorem]{Lemma}
\newtheorem{lem}[theorem]{Lemma}
\theoremstyle{definition}
\newtheorem{definition}[theorem]{Definition}
\theoremstyle{definition}
\newtheorem{algorithm}[theorem]{Algorithm}
\theoremstyle{remark}
\newtheorem{remark}[theorem]{Remark}
\newtheorem{example}[theorem]{Example}
\author[H. Li, P. Yin, Z. Zhang]{Hengguang Li$^\dagger$, Peimeng Yin$^\dagger$ and Zhimin Zhang$^\dagger$$^\ddagger$}
\address{$^\dagger$ Wayne State University, Department of Mathematics, Detroit, MI 48202, USA}
\email{li@wayne.edu; pyin@wayne.edu; zzhang@math.wayne.edu;}
\address{$^\ddagger$ Beijing Computational Science Research Center, Beijing 100193, China} \email{zmzhang@csrc.ac.cn}
\keywords{Biharmonic equation, reentrant corner, mixed formulation, $C^0$ finite element method, optimal error estimates.}
\subjclass{65N12, 65N30, 35J40}
\begin{document}
\title[FEM for biharmonic equation]{A $C^0$ finite element method for the biharmonic problem with Navier boundary conditions in a polygonal domain}

\date{\today}

\begin{abstract}
In this paper, we study the biharmonic equation with the Navier boundary conditions in a polygonal domain. In particular, we propose a method that effectively decouples the 4th-order problem into a system of Poisson equations. Different from the usual mixed method that leads to two Poisson problems but only applies to convex domains, the proposed decomposition involves a third Poisson equation to confine the solution in the correct function space, and therefore can be used in both convex and non-convex domains. A $C^0$ finite element algorithm is in turn proposed to solve the resulted system. In addition, we derive the optimal error estimates for the numerical solution on both quasi-uniform meshes and graded meshes. Numerical test results are presented to justify the theoretical findings.
\end{abstract}

\maketitle


\section{Introduction}
Let $\Omega\subset \mathbb R^2$ be a polygonal domain. Consider the biharmonic problem
\begin{eqnarray}\label{eqnbi}
\Delta^2u=f \quad {\rm{in}} \ \Omega,\qquad \quad u=0 \quad {\rm{and}} \quad \Delta u=0 \quad {\rm{on}} \ \pa\Omega.
\end{eqnarray}
The boundary conditions in (\ref{eqnbi}) are referred to as the homogeneous Navier boundary conditions \cite{MR1422248, MR2499118} that occur for example in the model for the static loading of a pure hinged thin plate. Equation (\ref{eqnbi}) is a 4th-order elliptic equation for which a direct finite element approximation usually involves  delicate constructions of the finite element space and of the variational formulation \cite{Ciarlet78, BS02}. An alternative approach is to use a mixed formulation to decompose the high-order problem into a system of  equations that may be easier to solve. This approach is particularly appealing for the biharmonic problem (\ref{eqnbi}) because the Navier boundary condition allows one to obtain two Poisson equations that are completely decoupled, which implies that a reasonable numerical solution should be achieved by merely applying a finite element Poisson solver in the mixed formulation.  However, it has been observed \cite{gerasimov2012, nazarov2007, MR2479119} that the performance of this usual mixed method depends on the domain geometry. In a convex domain, the corresponding numerical approximations converge to the solution of equation (\ref{eqnbi}) although the convergence rate may not be optimal.
 When the domain possesses reentrant corners, however, the result can be  misleading: this  mixed finite element formulation   produces numerical solutions that may be converging but to a wrong solution.


In this paper, we propose and analyze a  $C^0$  finite element method  for solving the biharmonic problem (\ref{eqnbi}). In particular, we shall devise an explicit mixed formulation to transform equation (\ref{eqnbi}) into a system of three Poisson equations. This is based on the observation that the aforementioned usual mixed formulation (decomposition into two Poisson equations) in fact defines a weak solution in a larger space than that for equation (\ref{eqnbi}). This mismatch in function spaces does not affect the solution in a convex domain; while in a non-convex domain, it allows additional singular functions  and therefore results in a solution different from that  in equation (\ref{eqnbi}). Our proposed mixed formulation ensures that the associated solution is identical to the solution of (\ref{eqnbi}) in both convex and non-convex domains. This is accomplished by  introducing an additional intermediate Poisson problem that confines the solution in the correct space.

To solve the proposed mixed formulation, we present a numerical algorithm based on the piecewise linear $C^0$ finite element.
Meanwhile, we carry out the error analysis on the finite element approximations for both the auxiliary function $w$ (see (\ref{eqnnew})) and   the solution $u$.  For the auxiliary function $w$, the error in the $H^1$ norm is standard  and has a convergence rate $h^{\frac{\pi}{\omega}}$ on a quasi-uniform mesh, where $\omega$ is the interior angle of the reentrant corner; its $L^2$ error estimate can be obtained using the duality argument.
For  the solution $u$, the error in the $H^1$ norm is  bounded by: (i) the interpolation error of the solution $u$ in $H^1$; (ii) the $L^2$ error for the auxiliary function $w$; and (iii) the $L^2$ error for the solution $\xi$ of the additional intermediate Poisson problem.
We shall show that the proposed algorithm has the optimal $H^1$ convergence rate for the solution $u$  on quasi-uniform meshes.

In addition, we derive regularity estimates for the proposed system in a class of Kondratiev-type weighted spaces.
Based on these regularity results, we in turn propose graded mesh refinement algorithms, such that the associated finite element methods recover the optimal convergence rate in the energy norm for  the auxiliary function $w$ even when $w$ is singular. To simplify the exposition and better present the idea, we adopt the  linear $C^0$ finite element method in this paper with the assumption that the domain $\Omega$ has at most one reentrant corner. The cases involving high-order finite elements and multiple reentrant corners will be discussed in a forthcoming paper.

The rest of the paper is organized as follows.  In Section \ref{sec-2},  we review the weak solutions of the biharmonic problem (\ref{eqnbi}) and the usual mixed formulation. In addition, we discuss the orthogonal space of the image of the operator $-\Delta$ and identify a basis function in this space. Then we propose a modified mixed formulation and show the equivalence of the solution to the original biharmonic problem.
In Section \ref{sec-3}, we propose the finite element algorithm and obtain  error estimates on quasi-uniform meshes for both the solution $u$ and the auxiliary function $w$.
In Section \ref{sec-4}, we introduce the weighted Sobolev space and derive the regularity estimates for the solution near the reentrant corner. Then we present the graded mesh algorithm and provide the optimal error estimates on graded meshes.
We report  numerical test results in Section \ref{sec-5} to validate the theory.

Throughout the paper,  the generic constant $C>0$ in our estimates  may be different at different occurrences. It will depend on the computational domain, but not on the functions  involved or the mesh level in the finite element algorithms.

\section{The biharmonic problem}\label{sec-2}

\subsection{Well-posedness of the solution} Denote by $H^m(\Omega)$, $m\geq 0$,  the Sobolev space that consists of functions whose $i$th derivatives are square integrable for $0\leq i\leq m$. Let $L^2(\Omega):=H^0(\Omega)$.
Recall that $H^1_0(\Omega)\subset H^1(\Omega)$ is the subspace consisting of functions with  zero trace on the boundary  $\pa\Omega$.   We shall denote the norm $\|\cdot\|_{L^2(\Omega)}$ by $\|\cdot\|$ when there is no ambiguity about the underlying domain.

The following variational formulation for equation (\ref{eqnbi}) can be obtained using integration by parts:
\begin{eqnarray}\label{eqn.firstbi}
a(u, v):=\int_\Omega \Delta u\Delta vdx=\int_\Omega fvdx=(f, v),\quad \forall v\in H^2(\Omega)\cap H^1_0(\Omega).
\end{eqnarray}
For a function $v\in H^2(\Omega)\cap H^1_0(\Omega)$, recall  the Poincar\'e-type inequality  \cite{Grisvard1992}: $\|\Delta v\|_{L^2(\Omega)}\geq C\|v\|_{H^2(\Omega)}$. Then by the Lax-Milgram Theorem, equation (\ref{eqn.firstbi}) defines a unique weak {solution} $u\in H^2(\Omega)\cap H^1_0(\Omega)$ for
 any  $f$ in the dual space of  $H^2(\Omega)\cap H^1_0(\Omega)$  (namely, $f\in \big(H^2(\Omega)\cap H^1_0(\Omega)\big)^*$). Meanwhile, the regularity of the solution $u$ depends on the given data $f$ and the domain geometry.

 \subsection{The usual mixed formulation}

 Intuitively, equation (\ref{eqnbi}) can be decoupled to the system of two Poisson problems by introducing an auxiliary function $w$ such that
\begin{eqnarray}\label{eqn7}
\left\{\begin{array}{ll}
-\Delta w=f \quad {\rm{in}} \ \Omega,\\
\hspace{0.6cm}w=0 \quad {\rm{on}} \ \pa\Omega;
\end{array}\right.
\qquad \qquad {\rm{and}}\qquad \qquad\left\{\begin{array}{ll}
-\Delta \bar{u}=w \quad {\rm{in}} \ \Omega,\\
\hspace{0.6cm}\bar{u}=0 \quad {\rm{on}} \ \pa\Omega.
\end{array}\right.
\end{eqnarray}
We refer to (\ref{eqn7}) as the usual mixed formulation. Note that numerical solvers for the Poisson problems (\ref{eqn7}) are readily available, while numerical approximation of the fourth-order problem (\ref{eqnbi}) is generally a much harder task.
The mixed weak formulation of (\ref{eqn7}) is to find $\bar{u}, w\in H_0^1(\Omega)$ such that
\begin{subequations}\label{weaksys}
\begin{align}
A( w, \phi ) = & ( f, \phi), \quad \forall \phi \in H_0^1(\Omega), \\
A( \bar{u}, \psi ) := & ( w, \psi), \quad \forall \psi \in H_0^1(\Omega),
\end{align}
\end{subequations}
where
$$
A(\phi,\psi)=\int_\Omega \nabla \phi \cdot \nabla \psi dx.
$$
Given $f\in H^{-1}(\Omega)\subset \big(H^2(\Omega)\cap H^1_0(\Omega)\big)^*$, it is clear that the weak solutions $\bar u, w$ are well defined by    (\ref{weaksys}) because they are solutions of decoupled Poisson problems \cite{Evans}. Since our goal is to solve the biharmonic problem (\ref{eqnbi}), an important question is whether the solution $u$ in (\ref{eqn.firstbi}) and the solution $\bar u$ in (\ref{weaksys}) are the same.

\begin{remark}\label{rk26} For $f\in H^{-1}(\Omega)$,   existing results suggest that under appropriate conditions, the solution $\bar u$ of the system (\ref{weaksys}) is equivalent to the solution $u$ of equation (\ref{eqn.firstbi}) in the sense that
\bes
u=\bar{u}\text{  in  } H^2(\Omega) \cap H_0^1(\Omega).
\ees
These conditions include
(i) the domain $\Omega$ and the given data $f$ being smooth, which can be verified by the regularity of these equations up to the domain boundary \cite{gilbarg1983, lions1972};
(ii) the polygonal domain $\Omega$ being convex  \cite{MR2479119}.
It is however pointed out that $u$ is not always equivalent to $\bar u$ when the polygonal domain $\Omega$ has reentrant corners, which is known as the Sapongyan paradox \cite{MR2270884,MR2479119}. In this case, the numerical solution for  (\ref{weaksys}) does not converge to the solution of the biharmonic problem (\ref{eqnbi}). In the next subsection, we shall study the structure of the solution in the presence of a reentrant corner in order to design effective numerical algorithms for equation (\ref{eqnbi}). \end{remark}

\subsection{Image of the Laplace operator and its orthogonal space} From now on, we assume the  given function  $f\in L^2(\Omega)$ in (\ref{eqnbi}). In addition,  assume that the polygonal domain $\Omega$ has a reentrant corner associated with the vertex $Q$ and the corresponding interior angle $\omega\in (\pi, 2\pi)$.
Without lost of generality, we  set $Q$ to be the origin. Let ($r, \theta$) be the polar coordinates centered at the vertex $Q$, such that $\omega$ is spanned by two half lines $\theta=0$ and $\theta=\omega$.
Given $R>0$,  we identify a sector $K_\omega^R \subset \Omega$ with radius $R$ as
$$
K_\omega^R = \{(r,\theta) |0 \leq r\leq R, 0\leq \theta \leq \omega\}.
$$
A sketch drawing of the domain $\Omega$ is shown in Figure \ref{fig:Omega}.

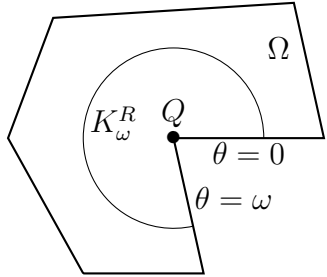
\begin{figure}
\begin{center}
\begin{tikzpicture}[scale=0.2]
\draw[thick]
(-6,-11) -- (2,-11) -- (0,-2) -- (10,-2) -- (8,7) -- (-8,6) -- (-11,-2) -- (-6,-11);
\draw (5,-3) node {$\theta = 0$};
\draw (4,-6) node {$\theta = \omega$};
\draw (7,4) node {$\Omega$};
\draw (6,-2) arc (0:283:6);
\draw (-4,-1) node {$K_\omega^R$};
\draw[thick] (0,-2) node {$\bullet$} node[anchor = south] {$Q$};
\end{tikzpicture}
\end{center}
\vspace*{-15pt}
    \caption{Domain $\Omega$ containing a reentrant corner.}
    \label{fig:Omega}
\end{figure}

The mapping $-\Delta: H^2(\Omega)\cap H^1_0(\Omega)\rightarrow L^2(\Omega)$ is injective and has a closed range \cite{Grisvard1992}. Denote by $\mathcal M$ the image of this mapping and by $\mathcal M^\perp$ its orthogonal complement. Then it follows $\mathcal M\oplus \mathcal M^\perp=L^2(\Omega)$. Therefore, if $w \in \mathcal M$ in (\ref{weaksys}), we have equivalent solutions $u=\bar u$ in $H^2(\Omega) \cap H_0^1(\Omega)$. When the domain $\Omega$ is convex, one has $\mathcal M=L^2(\Omega)$; namely, the solution of the Poisson equation with the Dirichlet boundary condition is always in $H^2(\Omega)$ when $f\in L^2(\Omega)$. Thus,  in a convex domain, the condition $w \in H^1_0(\Omega)\subset \mathcal M$ holds, and therefore the solutions $u$ and $\bar u$ are equivalent.   However, if   $\Omega$ contains reentrant corners, $\mathcal M$ is a strict subset of $L^2(\Omega)$ and in general $w \not\in \mathcal M$. Consequently, the solution in (\ref{eqn7}) $\bar u\notin H^2(\Omega)$ and it is different from the solution of (\ref{eqnbi}) $u\in H^2(\Omega) \cap H_0^1(\Omega)$. Fortunately, the space $\mathcal M^\perp$ is finite dimensional and it is possible to identify its basis.

We first introduce an $L^2$ function in domain $\Omega$ in the following way.
\begin{definition}\label{defs}
Given the parameters $\tau \in (0,1)$ and $R$ such that $K_\omega^R \subset \Omega$, we define an $L^2$ function in $\Omega$,
\begin{eqnarray}\label{ssol}
\xi(r, \theta; \tau, R):=s^-(r, \theta; \tau, R)+\zeta(r, \theta; \tau, R),
\end{eqnarray}
where
\begin{eqnarray}\label{sl2}
\bal
s^-(r, \theta; \tau, R)=&\eta(r; \tau, R)r^{-\frac{\pi}{\omega}}\sin\left(\frac{\pi}{\omega}\theta\right) \in L^2(\Omega),
\eal
\end{eqnarray}
with $\eta(r; \tau, R) \in C^\infty(\Omega)$ satisfying $\eta(r; \tau, R)=1$ for $0\leq r\leq \tau R$ and $\eta(r; \tau, R)=0$ for $r>R$, and
 $\zeta \in H_0^1(\Omega)$ satisfies
\be\label{l2part}
-\Delta \zeta = \Delta s^-  \text{ in } \Omega, \qquad \zeta = 0 \text{ on } \partial \Omega.
\ee
\end{definition}

From (\ref{sl2}), we see that $s^- \in C^\infty(\Omega \setminus K_\omega^\delta)$ for any $\delta>0$ and $s^- = 0$ for $(r, \theta) \in \Omega \setminus K_\omega^R$. Moreover, $\Delta s^- = 0$ if $r < \tau R$ or $r> R$.

\begin{lem}\label{slem}
For a given $\eta\in C^\infty(\Omega)$ as defined in Definition \ref{defs}, the function $\xi \in L^2(\Omega)$ is uniquely defined and satisfies
\be\label{seqn}
-\Delta \xi =0 \text{ in } \Omega, \qquad \xi =0 \text{ on } \partial \Omega.
\ee
Moreover, $\xi$ depends on the domain $\Omega$, but not on $\tau$ or $R$. Namely, for any $\tau_1, \tau_2$ and $R_1, R_2$ satisfying $0<\delta <\min\{ \tau_1 R_1, \tau_2 R_2 \}$, it follows
\be\label{paraindep}
\xi(r, \theta) :=\xi(r, \theta; \tau_1, R_1) = \xi(r, \theta; \tau_2, R_2).
\ee
\end{lem}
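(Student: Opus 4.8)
The plan is to check the three assertions in order: that $\xi$ is well defined for a given $\eta$, that it satisfies \eqref{seqn}, and that it does not depend on $\tau$ or $R$; only the last is substantive. For the first two points, note that by the facts recorded just after Definition~\ref{defs}, $\Delta s^-$ vanishes for $r<\tau R$ and for $r>R$ and $s^-$ is supported in $K_\omega^R$, so $\Delta s^-$ is supported in the closed annular region $\{\tau R\le r\le R\}\cap K_\omega^R$; there $s^-$ is smooth and bounded away from $Q$, hence $\Delta s^-$ is a bounded smooth function and in particular $\Delta s^-\in L^2(\Omega)$. Thus \eqref{l2part} is a Dirichlet--Poisson problem with right-hand side in $L^2(\Omega)\subset H^{-1}(\Omega)$, which by the Lax--Milgram theorem \cite{Evans} has a unique solution $\zeta\in H_0^1(\Omega)$; consequently $\xi=s^-+\zeta$ is uniquely determined by $\eta$. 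Adding the equation in \eqref{l2part} to the definition \eqref{ssol} gives $-\Delta\xi=-\Delta s^--\Delta\zeta=-\Delta s^-+\Delta s^-=0$ in $\mathcal D'(\Omega)$; no spurious distributional term is produced by the singularity of $s^-$ at $Q$, because $Q\in\partial\Omega$, so every test function in $C_c^\infty(\Omega)$ vanishes near $Q$. Finally $\zeta\in H_0^1(\Omega)$ and $s^-=0$ on $\partial\Omega$ — on the two edges $\theta=0$ and $\theta=\omega$ because $\sin(\pi\theta/\omega)=0$ there, and on the rest of $\partial\Omega$ because $s^-$ is supported in $K_\omega^R$ — so $\xi=0$ on $\partial\Omega$.

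For the independence \eqref{paraindep}, let $\xi_i=s_i^-+\zeta_i$ for $i=1,2$ be the functions built from $(\tau_i,R_i)$ with cutoffs $\eta_i$, and pick $\delta>0$ with $\delta<\min\{\tau_1R_1,\tau_2R_2\}$. The decisive observation is that the singular parts cancel upon subtraction, leaving a regular function:
\[
g:=s_1^--s_2^-=(\eta_1-\eta_2)\,r^{-\frac{\pi}{\omega}}\sin\!\big(\tfrac{\pi}{\omega}\theta\big).
\]
Since $\eta_1-\eta_2$ vanishes identically for $r\le\delta$ and for $r\ge\max\{R_1,R_2\}$, the function $g$ is supported in $\{\delta\le r\le\max\{R_1,R_2\}\}$, where it is a product of smooth functions; moreover $g=s_1^--s_2^-$ vanishes on $\partial\Omega$ since each $s_i^-$ does. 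Hence $g\in H_0^1(\Omega)$. Now set $v:=\zeta_1-\zeta_2+g\in H_0^1(\Omega)$. Using \eqref{l2part} for $i=1$ and $i=2$, $-\Delta v=\Delta s_1^--\Delta s_2^--\Delta g=\Delta g-\Delta g=0$, so $v$ is a harmonic function in $H_0^1(\Omega)$ and therefore $v\equiv 0$ by uniqueness of the homogeneous Dirichlet problem. Consequently
\[
\xi_1-\xi_2=(s_1^--s_2^-)+(\zeta_1-\zeta_2)=g+(\zeta_1-\zeta_2)=v=0,
\]
which is \eqref{paraindep}.

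Once one has the idea of the previous paragraph — that subtracting two admissible decompositions kills the singular part at $Q$ and leaves an $H_0^1$ function on which uniqueness of the Dirichlet Laplacian applies — all the steps are routine. The one technical point worth double-checking is that $\Delta s^-$, and hence $\Delta g$, genuinely lies in $L^2(\Omega)$ with no hidden point mass at $Q$: this is exactly what makes \eqref{l2part} a legitimate Poisson problem and lets the uniqueness argument close, and it follows from the facts recorded right after Definition~\ref{defs} together with $Q$ being a boundary, not interior, point. I do not anticipate any further obstacle.
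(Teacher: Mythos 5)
Your proposal is correct and follows essentially the same route as the paper: uniqueness of $\zeta$ (hence $\xi$) via Lax--Milgram for \eqref{l2part}, the cancellation $-\Delta\xi=-\Delta s^--\Delta\zeta=0$ with zero boundary values, and, for \eqref{paraindep}, the observation that the two singular parts agree near $Q$ so that $\xi_1-\xi_2$ is a harmonic function in $H_0^1(\Omega)$ and hence vanishes. The extra care you take (noting $\Delta s^-\in L^2(\Omega)$ and that no distributional term arises at $Q$ since $Q\in\partial\Omega$) only sharpens the paper's argument and changes nothing essential.
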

\begin{proof}
	{Note that for a given $\eta$, $s^-$ is uniquely defined in (\ref{sl2}) and therefore $\zeta\in H^1_0(\Omega)$ is uniquely defined via (\ref{l2part}). Thus,  $\xi$ is uniquely defined in (\ref{ssol}). We first prove (\ref{seqn}).}
 Taking $-\Delta$ on both side of (\ref{ssol}), we have by (\ref{l2part})
\bes
\bal
-\Delta \xi = -\left( \Delta s^- + \Delta \zeta \right) = 0.
\eal
\ees
It is clear that $\xi=0$ on $\partial \Omega$  since both $s^-$ and $\zeta$ equal zero on the boundary.

We proceed to prove (\ref{paraindep}). For $0<\delta <\min\{ \tau_1 R_1, \tau_2 R_2 \}$, we have $K_\omega^\delta \subset K_\omega^{\tau_1 R_1} \cap K_\omega^{\tau_2 R_2} \subset \Omega$. By (\ref{sl2}), we have
$$
s^-(r, \theta; \tau_1, R_1) - s^-(r, \theta; \tau_2, R_2) = 0, \quad (r,\theta) \in K_\omega^\delta.
$$
Recall that $s^-(r, \theta; \tau_i, R_i) \in C^\infty(\Omega \setminus K_\omega^\delta)$, then it follows
$$
s^-(r, \theta; \tau_1, R_1) - s^-(r, \theta; \tau_2, R_2) \in C^\infty(\Omega).
$$
Since $\zeta(r, \theta; \tau_i, R_i)\in H_0^1(\Omega)$,  we have
\begin{align*}
\tilde{\xi} := & \xi(r, \theta; \tau_1, R_1) - \xi(r, \theta; \tau_2, R_2) \\
= &\zeta(r, \theta; \tau_1, R_1) - \zeta(r, \theta; \tau_2, R_2) + \left(s^-(r, \theta; \tau_1, R_1) - s^-(r, \theta; \tau_2, R_2) \right) \in H_0^1(\Omega).
\end{align*}
Meanwhile, from (\ref{seqn}), we have
$$
\Delta \tilde{\xi} = \Delta \xi(r, \theta; \tau_1, R_1) - \Delta \xi(r, \theta; \tau_2, R_2)=0 \text{ in } \Omega, \qquad \tilde{\xi} =0 \text{ on } \partial \Omega.
$$
By the Lax-Milgram Theorem, we have $\tilde{\xi} = 0$, and thus (\ref{paraindep}) holds.
\end{proof}

From now on we shall write $\xi(r,\theta)$ instead of $\xi(r, \theta; \tau, R)$, since it is independent of $\tau$ and $R$. We also notice   that $\xi(r, \theta)\not\equiv0$, because otherwise we have $ s^- =- \zeta \in H^1_0(\Omega)$, which contradicts  the fact that $s^-\notin H^1_0(\Omega)$.

\begin{remark}
In Lemma \ref{slem}, we have obtained $\xi \in L^2(\Omega)$ through (\ref{ssol}) instead of solving (\ref{seqn}) directly, which is due to the fact that the solution to (\ref{seqn}) is not unique in $L^2$ although it is uniquely defined in $H^1$. For example, the function $\xi\not\equiv 0$  as defined in (\ref{ssol}) and $\xi =0$  are both solutions to (\ref{seqn}) in $L^2(\Omega)$.
\end{remark}

Now we are ready to describe the subspace $\mathcal M^\perp$.
For the dimension of $\mathcal M^\perp$, we have the following result \cite{Grisvard1992}.
\begin{lem}\label{lem92}
The dimension of $\mathcal{M}^\perp$ is equal to the cardinality of the set $\{\lambda_k: 0<\lambda_{k}<1\}$ for $k\geq 1$, namely
\ben
\dim(\mathcal{M}^\perp) = \text{card}\{\lambda_{k} : 0<\lambda_{k}<1\},
\een
with $\lambda_{k}^2$ being the eigenvalues to the following one dimensional problem
\ben
-\partial_{\theta \theta}\phi_{k} = \lambda_{k}^2 \phi_{k} \quad \text{ in } (0,\omega), \qquad \phi(0)=\phi(\omega) =0.
\een
\end{lem}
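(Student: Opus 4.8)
The plan is to reduce the statement to a codimension count for $\mathcal{M}$ and then to invoke the Kondratiev--Grisvard corner regularity theory for the Dirichlet Laplacian on a polygon. Since $-\Delta$ has closed range, $\mathcal{M}$ is a closed subspace of the Hilbert space $L^2(\Omega)$, so $L^2(\Omega)=\mathcal{M}\oplus\mathcal{M}^\perp$ and hence $\dim(\mathcal{M}^\perp)=\operatorname{codim}_{L^2(\Omega)}\mathcal{M}$. It therefore suffices to compute this codimension.

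Next I would introduce the space $V:=\{v\in H^1_0(\Omega):\Delta v\in L^2(\Omega)\}$ and observe that $-\Delta:V\to L^2(\Omega)$ is a linear bijection: it is injective because $-\Delta$ is already injective on $H^1_0(\Omega)$, and it is surjective because for every $g\in L^2(\Omega)\subset H^{-1}(\Omega)$ the unique $H^1_0$-weak solution of $-\Delta v=g$ lies in $V$ by definition. Under this bijection the subspace $H^2(\Omega)\cap H^1_0(\Omega)\subset V$ is mapped exactly onto $\mathcal{M}$ (this is the definition of $\mathcal{M}$), so $\operatorname{codim}_{L^2(\Omega)}\mathcal{M}=\operatorname{codim}_V\big(H^2(\Omega)\cap H^1_0(\Omega)\big)$. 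The question thus becomes: how far is a function of $V$ from being in $H^2$?

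The answer is the corner-regularity splitting for the Dirichlet Laplacian \cite{Grisvard1992}. Because $\Omega$ has the single reentrant vertex $Q$ of opening $\omega$ (any remaining corners are convex and produce only singular exponents $>1$, which play no role), every $v\in V$ admits a unique decomposition
\[
v=v_{\mathrm{reg}}+\sum_{0<\lambda_k<1}c_k\,\eta(r)\,r^{\lambda_k}\sin(\lambda_k\theta),\qquad v_{\mathrm{reg}}\in H^2(\Omega)\cap H^1_0(\Omega),
\]
with $c_k\in\mathbb{R}$, $\eta$ a corner cutoff as in Definition \ref{defs}, and $\lambda_k=k\pi/\omega$, which are exactly the positive square roots of the eigenvalues of the one-dimensional problem in the statement (with eigenfunctions $\sin(\lambda_k\theta)$). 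Each singular function $S_k:=\eta\,r^{\lambda_k}\sin(\lambda_k\theta)$ lies in $V$ because $\Delta S_k$ is supported away from $Q$ (where $r^{\lambda_k}\sin(\lambda_k\theta)$ is harmonic and smooth) and hence is in $L^2(\Omega)$, but $S_k\notin H^2(\Omega)$ since its second derivatives behave like $r^{\lambda_k-2}$ with $\lambda_k<1$; moreover the $S_k$ are linearly independent modulo $H^2(\Omega)\cap H^1_0(\Omega)$, because a nontrivial combination $\sum c_kS_k$ carries the genuine $r^{\lambda_{k_0}}$-singularity at $Q$ for the smallest index with $c_{k_0}\neq0$ and therefore cannot belong to $H^2(\Omega)$. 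Hence $V/\big(H^2(\Omega)\cap H^1_0(\Omega)\big)$ has dimension $\operatorname{card}\{\lambda_k:0<\lambda_k<1\}$, and combining with the previous paragraph gives $\dim(\mathcal{M}^\perp)=\operatorname{card}\{\lambda_k:0<\lambda_k<1\}$.

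The only substantial input is the corner-regularity splitting, which is classical and which I would cite rather than reprove; everything else is a soft functional-analytic reduction. For bookkeeping one should also note that under the standing assumption $\omega\in(\pi,2\pi)$ the unique index with $0<\lambda_k=k\pi/\omega<1$ is $k=1$, so in our setting $\dim(\mathcal{M}^\perp)=1$ and its basis vector is, up to scaling, the function $\xi$ constructed in Lemma \ref{slem}; this identification is what will be exploited in the modified mixed formulation of the next subsection.
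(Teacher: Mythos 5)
Your argument is correct, but note that the paper does not actually prove this lemma at all: it is quoted verbatim from \cite{Grisvard1992}, and the only ``proof'' in the paper is that citation. What you supply is therefore a genuine derivation, and it takes the primal route: you reduce $\dim(\mathcal{M}^\perp)$ to the codimension of $H^2(\Omega)\cap H^1_0(\Omega)$ inside $V=\{v\in H^1_0(\Omega):\Delta v\in L^2(\Omega)\}$ via the bijection $-\Delta:V\to L^2(\Omega)$ (this soft step is fine, since $\mathcal{M}$ closed gives $L^2/\mathcal{M}\cong\mathcal{M}^\perp$), and then count the singular functions $\eta\,r^{\lambda_k}\sin(\lambda_k\theta)$, $0<\lambda_k<1$, in Grisvard's splitting theorem for the Dirichlet Laplacian. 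Grisvard's own proof of the quoted statement --- which is the one the paper implicitly leans on, and which motivates its construction of $\xi=s^-+\zeta$ with $s^-=\eta\,r^{-\pi/\omega}\sin(\pi\theta/\omega)$ in Definition \ref{defs} --- instead characterizes $\mathcal{M}^\perp$ directly through the \emph{dual} singular functions with negative exponents $r^{-\lambda_k}$, one for each $\lambda_k\in(0,1)$. The two routes rest on the same Kondratiev--Grisvard corner theory and give the same count; yours has the advantage of reducing everything to the widely known regular-plus-singular decomposition of solutions, while the dual-function route has the advantage of producing an explicit basis of $\mathcal{M}^\perp$, which is exactly what the paper needs next in Theorem \ref{Mperp}. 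Two small bookkeeping points you gloss over but should state: the cutoff singular functions $S_k$ lie in $H^1_0(\Omega)$ (they vanish on the two edges $\theta=0,\omega$ and outside the support of $\eta$), and the splitting theorem must be applied with all vertices of the polygon in view, the convex ones contributing no exponents in $(0,1)$; neither affects the conclusion.
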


For $k\geq 1$, it is clear that when $\lambda_k>0$,
\be\label{eigen}
\lambda_{k} = \frac{k\pi}{\omega}, \quad \phi_{k}= \sqrt{\frac{2}{\omega}} \sin \left(\frac{k\pi}{\omega} \theta\right).
\ee
Hence, for the domain $\Omega$ with one reentrant corner, $\mathcal M^\perp$ satisfies the following theorem.
\begin{theorem}\label{Mperp}
The dimension of $\mathcal{M}^\perp$ is $\dim(\mathcal{M}^\perp)=1$ and $\mathcal{M}^\perp = \text{span}\{\xi(r,\theta)\}$,
where $\xi(r,\theta)$ is the $L^2$ function defined in (\ref{ssol}).
\end{theorem}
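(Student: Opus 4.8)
The plan is to treat the two claims in turn. The dimension count is immediate from Lemma \ref{lem92} and (\ref{eigen}): since the reentrant angle satisfies $\omega\in(\pi,2\pi)$, we have $\lambda_1=\pi/\omega\in(1/2,1)$ while $\lambda_k=k\pi/\omega\ge 2\pi/\omega>1$ for every $k\ge 2$, so $\{\lambda_k:0<\lambda_k<1\}=\{\lambda_1\}$ and hence $\dim(\mathcal{M}^\perp)=1$. Since $\xi\not\equiv0$ (as observed right after Lemma \ref{slem}), it remains only to prove $\xi\in\mathcal{M}^\perp$, i.e.\ that $(\xi,-\Delta v)=0$ for every $v\in H^2(\Omega)\cap H^1_0(\Omega)$; this is the substance of the proof.

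To establish the orthogonality I would use the splitting $\xi=s^-+\zeta$ from (\ref{ssol}). The $\zeta$-contribution is routine: since $\zeta\in H^1_0(\Omega)$ and $\Delta\zeta=-\Delta s^-\in L^2(\Omega)$, integrating by parts twice (all boundary terms vanish because $\zeta$ and $v$ have zero trace) gives $(\zeta,-\Delta v)=(-\Delta\zeta,v)=(\Delta s^-,v)$. For the $s^-$-contribution, which is only in $L^2$, I would excise the vertex: with $\Omega_\epsilon:=\Omega\setminus\overline{B_\epsilon(Q)}$ and $\Gamma_\epsilon:=\Omega\cap\partial B_\epsilon(Q)$, Green's second identity applies on $\Omega_\epsilon$ because $s^-$ is smooth on $\overline{\Omega_\epsilon}$ and $v\in H^2(\Omega)$. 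Since $s^-$ vanishes on $\partial\Omega$ (it is $0$ on the two sides at $Q$ and identically $0$ outside $K_\omega^R$) and $v$ has zero trace, the only surviving boundary term is over $\Gamma_\epsilon$; letting $\epsilon\to0$ (using $s^-\Delta v,\ v\Delta s^-\in L^1(\Omega)$) yields
\begin{equation*}
(s^-,-\Delta v)=-(\Delta s^-,v)-\lim_{\epsilon\to0}\int_{\Gamma_\epsilon}\bigl(s^-\,\partial_\nu v-v\,\partial_\nu s^-\bigr)\,ds .
\end{equation*}
Adding the two contributions, the terms $\pm(\Delta s^-,v)$ cancel, so $(\xi,-\Delta v)=-\lim_{\epsilon\to0}\int_{\Gamma_\epsilon}(s^-\,\partial_\nu v-v\,\partial_\nu s^-)\,ds$, and everything reduces to showing this boundary limit vanishes.

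For the boundary limit, note that for $\epsilon<\tau R$ one has $\eta\equiv1$ on $\Gamma_\epsilon$, so $s^-=\epsilon^{-\pi/\omega}\sin(\pi\theta/\omega)$ and $\partial_\nu=-\partial_r$ there; pairing $v(\epsilon,\cdot)$ and $\partial_r v(\epsilon,\cdot)$ against $\sin(\pi\theta/\omega)$ and setting $a_1(r):=\int_0^\omega v(r,\theta)\sin(\pi\theta/\omega)\,d\theta$, one obtains
\begin{equation*}
\int_{\Gamma_\epsilon}\bigl(s^-\,\partial_\nu v-v\,\partial_\nu s^-\bigr)\,ds=-\,\epsilon^{1-\pi/\omega}\,a_1'(\epsilon)-\frac{\pi}{\omega}\,\epsilon^{-\pi/\omega}\,a_1(\epsilon).
\end{equation*}
So it suffices to prove $a_1(\epsilon)=o(\epsilon^{\pi/\omega})$ and $a_1'(\epsilon)=o(\epsilon^{\pi/\omega-1})$ as $\epsilon\to0$. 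I expect this to be the crux of the argument: it is precisely the statement that an $H^2\cap H^1_0$ function carries none of the dual singular behavior $r^{-\pi/\omega}\sin(\pi\theta/\omega)$ near $Q$. To prove it, project $\Delta v\in L^2(\Omega)$ onto $\sin(\pi\theta/\omega)$ to find that $a_1$ solves the Euler-type equation
\begin{equation*}
a_1''+\frac1r\,a_1'-\frac{(\pi/\omega)^2}{r^2}\,a_1=g_1(r)\quad\text{on }(0,R),\qquad \int_0^R g_1(r)^2\,r\,dr<\infty ;
\end{equation*}
its homogeneous solutions are $r^{\pm\pi/\omega}$, and the $r^{-\pi/\omega}$ component is excluded because $v\in H^1$, while the $r^{+\pi/\omega}$ component is excluded because $v\in H^2$ and $\pi/\omega<1$ force $r^{\pi/\omega}\sin(\pi\theta/\omega)\notin H^2$ near $Q$. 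A variation-of-parameters estimate, using $\|g_1\|_{L^2((0,r),\,r\,dr)}\to0$ as $r\to0$, then bounds the remaining (particular) part of $a_1$ and its derivative by $o(r^{\pi/\omega})$ and $o(r^{\pi/\omega-1})$ respectively. This forces the boundary limit to vanish, hence $\xi\in\mathcal{M}^\perp$, and together with $\dim(\mathcal{M}^\perp)=1$ and $\xi\not\equiv0$ we conclude $\mathcal{M}^\perp=\mathrm{span}\{\xi\}$. An alternative that avoids the last two steps is to invoke Grisvard's decomposition of the Poisson solution together with his representation of the coefficient of the primal singular function $r^{\pi/\omega}\sin(\pi\theta/\omega)$ as a nonzero multiple of $(\,\cdot\,,\xi)$, which identifies $\mathcal{M}=\{\xi\}^{\perp}$ directly; the computation above is a self-contained version of that fact.
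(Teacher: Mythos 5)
Your proposal is correct and follows the same skeleton as the paper: dimension count via Lemma \ref{lem92} and (\ref{eigen}), orthogonality of $\xi$ to $\mathcal{M}=\Delta\bigl(H^2(\Omega)\cap H^1_0(\Omega)\bigr)$, and then $\xi\not\equiv 0$ to conclude $\mathcal{M}^\perp=\mathrm{span}\{\xi\}$. The difference is in how the orthogonality $(\Delta v,\xi)=0$ is justified. The paper disposes of it in one line, ``using Green's Theorem'' together with (\ref{seqn}), which glosses over the real issue: $\xi=s^-+\zeta$ is only in $L^2(\Omega)$ (indeed $s^-\notin H^1(\Omega)$), so the identity $(\Delta v,\xi)=(v,\Delta\xi)$ with vanishing boundary terms is not a direct application of Green's formula. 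You treat exactly this point: the $\zeta$-part is handled by the weak formulation of (\ref{l2part}), the $s^-$-part by excising the vertex and tracking the arc integral over $\Gamma_\epsilon$, and the vanishing of that limit is reduced to the statement that the first angular Fourier coefficient of an $H^2\cap H^1_0$ function carries no $r^{\pi/\omega}$ (primal singular) component near $Q$ --- which is the classical Grisvard-type argument behind the paper's citation of \cite{Grisvard1992}. Two small remarks on your sketch: the bound $a_1(\epsilon)=o(\epsilon^{\pi/\omega})$ already follows from $H^2(\Omega)\hookrightarrow C^{0,\gamma}(\overline\Omega)$ with $\pi/\omega<\gamma<1$ and $v(Q)=0$, but the derivative bound $a_1'(\epsilon)=o(\epsilon^{\pi/\omega-1})$ genuinely needs the ODE/variation-of-parameters analysis you outline (pointwise trace estimates on $\nabla v$ are not available), and there you must be careful that excluding the $r^{\pm\pi/\omega}$ homogeneous components and controlling the particular solution uses $\int_0^{R}|a_1''|^2 r\,dr<\infty$ and $\|g_1\|_{L^2((0,r),r\,dr)}\to 0$; alternatively, since the limit of the $\Gamma_\epsilon$ integral exists, it suffices to show it vanishes along a subsequence, which eases these estimates. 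So your argument is a correct, self-contained expansion of the step the paper leaves implicit, at the cost of more work; the paper's proof is shorter because it implicitly leans on the known characterization of $\mathcal{M}^\perp$ by dual singular functions.
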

\begin{proof}
Based on  Lemma \ref{lem92}, to find the dimension of $\mathcal{M}^\perp$, we only need to find the number of the integer(s) $k\geq 1$ such that $0<\lambda_{k}<1$.
According to (\ref{eigen}), $\lambda_{k}= \frac{k\pi}{\omega}<1$ implies
$
k < \frac{\omega}{\pi}.
$
Since $\frac{\omega}{\pi} \in (1, 2)$, thus $k=1$, namely $\dim(\mathcal{M}^\perp)=1$.

For $\xi$ defined in (\ref{ssol}), using Green's Theorem we have
\be\label{orthogonal}
(\Delta v, \xi) = (v, \Delta \xi) = 0, \quad \forall v \in H^2(\Omega)\cap H^1_0(\Omega),
\ee
where we have used (\ref{seqn}) for the second equality.
Note that $\Delta v \in \mathcal{M}$. Therefore,  (\ref{orthogonal}) implies $\xi \in \mathcal{M}^\perp$. Since $\xi \not = 0$ and $\dim(\mathcal{M}^\perp)=1$, we obtain the conclusion of the theorem.
\end{proof}

By Theorem \ref{Mperp}, for any $w \in L^2(\Omega)$ it can be uniquely expressed as
$$
w=w_\mathcal{M}+ c \xi,
$$
where $w_\mathcal{M} = w - c \xi\in \mathcal{M}$ and the coefficient
\be\label{scoef}
c=\frac{(w,\xi)-(w_\mathcal{M},\xi)}{\|\xi\|^2}=\frac{(w,\xi)}{\|\xi\|^2}.
\ee
\subsection{The modified mixed formulation}
Based on the discussion above, we propose a modified mixed formulation for (\ref{eqnbi}),
\begin{eqnarray}\label{eqnnew}
\left\{\begin{array}{ll}
-\Delta w=f \quad {\rm{in}} \ \Omega,\\
\hspace{0.6cm}w=0 \quad {\rm{on}} \ \pa\Omega;
\end{array}\right.
\qquad \qquad {\rm{and}}\qquad \qquad\left\{\begin{array}{ll}
-\Delta \tilde{u}=w - c \xi \quad {\rm{in}} \ \Omega,\\
\hspace{0.6cm}\tilde{u}=0 \quad {\rm{on}} \ \pa\Omega,
\end{array}\right.
\end{eqnarray}
where $\xi$ is given in (\ref{ssol}) and the coefficient $c$ is shown in (\ref{scoef}).
The corresponding modified mixed weak formulation for (\ref{eqnnew}) is to find $\tilde{u}, w\in H_0^1(\Omega)$ such that
\begin{subequations}\label{weaksysnew}
\begin{align}
A(w,\phi) = & (f, \phi), \\
A(\tilde{u}, \psi) = & (w- c \xi, \psi),
\end{align}
\end{subequations}
for any $\phi, \psi \in H_0^1(\Omega)$.

Then we have the following result for the modified mixed formulation.
\begin{theorem}\label{solequthm}
Let $\tilde{u}$ be the solution of the modified mixed weak formulation (\ref{weaksysnew}) and $u$ be the solution of the weak formulation (\ref{eqn.firstbi}). Then  $u=\tilde{u}$ in $H^2(\Omega)\cap H^1_0(\Omega)$.
\end{theorem}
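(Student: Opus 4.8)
The plan is to split the claim into two steps: first show that the solution $\tilde{u}$ of (\ref{weaksysnew}) actually lies in $H^2(\Omega)\cap H^1_0(\Omega)$ (a priori it is only known to be in $H^1_0(\Omega)$), and then show that this $\tilde{u}$ satisfies the variational identity (\ref{eqn.firstbi}), so that uniqueness via Lax--Milgram forces $\tilde{u}=u$. The engine of both steps is the observation that the coefficient $c$ in (\ref{scoef}) is precisely chosen so that $w-c\xi$ is the orthogonal projection of $w$ onto $\mathcal{M}$; this is what the third (intermediate) Poisson data is designed to accomplish.

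For the first step, I would argue as follows. By Theorem \ref{Mperp}, $L^2(\Omega)=\mathcal{M}\oplus\mathcal{M}^\perp$ with $\mathcal{M}^\perp=\mathrm{span}\{\xi\}$, and from (\ref{scoef}) one checks directly that $(w-c\xi,\xi)=0$, i.e. $w-c\xi\in\mathcal{M}$. Since $\mathcal{M}$ is, by definition, the image of the injective map $-\Delta\colon H^2(\Omega)\cap H^1_0(\Omega)\to L^2(\Omega)$, there is a (unique) $u^\sharp\in H^2(\Omega)\cap H^1_0(\Omega)$ with $-\Delta u^\sharp=w-c\xi$. Testing this identity against $\psi\in H^1_0(\Omega)$ and integrating by parts gives $A(u^\sharp,\psi)=(w-c\xi,\psi)$, so $u^\sharp$ solves the second equation of (\ref{weaksysnew}); since that weak Poisson problem has a unique solution in $H^1_0(\Omega)$, we conclude $\tilde{u}=u^\sharp\in H^2(\Omega)\cap H^1_0(\Omega)$.

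For the second step, take any $v\in H^2(\Omega)\cap H^1_0(\Omega)$. Using $\Delta\tilde{u}=-(w-c\xi)$ from the first step,
\[
a(\tilde{u},v)=\int_\Omega \Delta\tilde{u}\,\Delta v\,dx=-(w-c\xi,\Delta v)=-(w,\Delta v)+c(\xi,\Delta v).
\]
By (\ref{orthogonal}) (which rests on (\ref{seqn})), $(\xi,\Delta v)=(\Delta v,\xi)=0$. For the remaining term, Green's identity applied to $w\in H^1_0(\Omega)$ and $v\in H^2(\Omega)$ (the boundary term vanishes because $w$ has zero trace) gives $-(w,\Delta v)=\int_\Omega\nabla w\cdot\nabla v\,dx=A(w,v)$. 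Since $v\in H^1_0(\Omega)$ is an admissible test function in the first equation of (\ref{weaksysnew}), $A(w,v)=(f,v)$. Hence $a(\tilde{u},v)=(f,v)$ for all $v\in H^2(\Omega)\cap H^1_0(\Omega)$, which is exactly (\ref{eqn.firstbi}); by the Lax--Milgram uniqueness already invoked for (\ref{eqn.firstbi}), $\tilde{u}=u$ in $H^2(\Omega)\cap H^1_0(\Omega)$.

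The main obstacle, and the only place where anything nontrivial happens, is the first step: one must know that $w-c\xi\in\mathcal{M}=\mathrm{range}(-\Delta)$, which is where the decomposition $L^2(\Omega)=\mathcal{M}\oplus\mathrm{span}\{\xi\}$ from Theorem \ref{Mperp} and the exact form of $c$ are essential — this is precisely the ingredient missing from the usual mixed formulation (\ref{weaksys}) and the source of the Sapongyan paradox. The remaining manipulations are routine, provided one is careful that all integrations by parts only use one $H^2$ factor against an $H^1_0$ factor (or an $H^2\cap H^1_0$ factor against the $L^2$ function $\xi$ via the already-established (\ref{orthogonal})), so no regularity beyond what is available is needed.
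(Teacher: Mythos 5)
Your proposal is correct and takes essentially the same approach as the paper's proof: it uses $(w-c\xi,\xi)=0$ together with the decomposition $L^2(\Omega)=\mathcal{M}\oplus\mathrm{span}\{\xi\}$ from Theorem \ref{Mperp} to conclude $w-c\xi\in\mathcal{M}$ and hence $\tilde u\in H^2(\Omega)\cap H^1_0(\Omega)$, then removes the $\xi$-term via (\ref{orthogonal}) and invokes Lax--Milgram uniqueness for (\ref{eqn.firstbi}). Your only deviations --- spelling out the lifting $u^\sharp$ and its identification with $\tilde u$, and using the strong identity $-\Delta\tilde u=w-c\xi$ in place of the paper's density argument --- are minor elaborations of the same argument, not a different route.
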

\begin{proof}
Since $w \in H_0^1(\Omega) \subset L^2(\Omega)$, we have $w-c \xi \in \mathcal{M}$, which implies $\tilde{u} \in H^2{(\Omega)}\cap H^1_0(\Omega)$.
Thus (\ref{weaksysnew}b) becomes
\be\label{weakmb}
-(\Delta \tilde{u}, \psi) = \left(w-c \xi, \psi \right), \quad \forall \psi \in H_0^1(\Omega).
\ee
Note $\Delta \tilde{u} \in L^2(\Omega)$. Then following the density argument, (\ref{weakmb}) leads to
\begin{align*}
-(\Delta \tilde{u}, \psi) = \left(w- c \xi, \psi \right), \quad \forall \psi \in L^2(\Omega).
\end{align*}
Thus, for any $ \phi \in H^2(\Omega)\cap H^1_0(\Omega)$, we have $\Delta \phi \in L^2(\Omega)$ and therefore
\be\label{weakwtow}
(\Delta \tilde{u}, \Delta \phi) = \left(w-c \xi, -\Delta \phi \right).
\ee
Recall from (\ref{orthogonal}) that $\left(\xi, \Delta \phi \right)=0$. Then the right hand side of (\ref{weakwtow}) becomes
$$
\left(w-c \xi, -\Delta \phi \right)=A(w,\phi)+ \left(c \xi, \Delta \phi \right)=A(w,\phi)=(f,\phi),
$$
where the last equation is based on (\ref{weaksysnew}a). Hence, we have obtained that $\tilde u\in H^2(\Omega)\cap H^1_0(\Omega)$ satisfies
$$
(\Delta \tilde{u}, \Delta \phi) =(f,\phi), \quad \forall \phi \in H^2(\Omega)\cap H^1_0(\Omega).
$$
This is the same equation as  (\ref{eqn.firstbi}) that defines $u$. Consequently, $\tilde u=u\in H^2(\Omega)\cap H^1_0(\Omega)$ and we have completed the proof.
\end{proof}

Therefore, by Theorem \ref{solequthm}, the solution $u$ of the biharmonic problem (\ref{eqnbi}) satisfies
\begin{eqnarray}\label{eqnnew+}
\left\{\begin{array}{ll}
-\Delta w=f \quad {\rm{in}} \ \Omega,\\
\hspace{0.6cm}w=0 \quad {\rm{on}} \ \pa\Omega;
\end{array}\right.
\qquad \qquad {\rm{and}}\qquad \qquad\left\{\begin{array}{ll}
-\Delta {u}=w - c \xi \quad {\rm{in}} \ \Omega,\\
\hspace{0.6cm}{u}=0 \quad {\rm{on}} \ \pa\Omega.
\end{array}\right.
\end{eqnarray}
The corresponding weak formulation is to find $u, w\in H_0^1(\Omega)$ such that for any $\phi, \psi \in H_0^1(\Omega)$,
\begin{subequations}\label{weaknew}
	\begin{align}
		A(w,\phi) = & (f, \phi), \\
		A(u, \psi) = & (w- c \xi, \psi),
	\end{align}
\end{subequations}
where $c$ is given in (\ref{scoef}).

In addition,  we have the following regularity result.
\begin{lemma}\label{lemsreg}
Given $f \in L^2(\Omega)$, for $w,u$ in (\ref{weaknew}), it follows
\begin{subequations}\label{weakreg0}
\begin{align}
\|w\|_{H^1(\Omega)} \leq & C \|f\|,\\
\|u\|_{H^2(\Omega)} \leq & C \|f\|.
\end{align}
\end{subequations}
\end{lemma}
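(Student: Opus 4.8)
The plan is to treat the two estimates separately: the first is the classical a priori bound for the Poisson problem, and the second follows from it together with the Poincaré-type inequality for the biharmonic operator and a Cauchy--Schwarz bound on the coefficient $c$.

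For the first estimate, note that (\ref{weaknew}a) is exactly the weak form of $-\Delta w = f$ with homogeneous Dirichlet data. Taking $\phi = w$ and using the Poincaré inequality on $H^1_0(\Omega)$ gives $\|\nabla w\|^2 = (f,w) \leq \|f\|\,\|w\| \leq C\|f\|\,\|\nabla w\|$, hence $\|w\|_{H^1(\Omega)} \leq C\|f\|$. Nothing here depends on the domain being convex.

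For the second estimate, I would first record that $u \in H^2(\Omega)\cap H^1_0(\Omega)$. Indeed, $w \in H^1_0(\Omega) \subset L^2(\Omega)$, so by (\ref{scoef}) and Theorem \ref{Mperp} the right-hand side $w - c\xi$ of (\ref{weaknew}b) lies in $\mathcal M$, the image of $-\Delta\colon H^2(\Omega)\cap H^1_0(\Omega)\to L^2(\Omega)$; since the $H^1_0$-solution of the Poisson problem with this right-hand side is unique, it must coincide with the $u$ defined by (\ref{weaknew}b), so $u \in H^2(\Omega)\cap H^1_0(\Omega)$. (Alternatively, this is immediate from Theorem \ref{solequthm}.) Now apply the Poincaré-type inequality $\|u\|_{H^2(\Omega)} \leq C\|\Delta u\|$ recalled in Section \ref{sec-2}, and use $-\Delta u = w - c\xi$ to get $\|u\|_{H^2(\Omega)} \leq C\|w - c\xi\| \leq C\big(\|w\| + |c|\,\|\xi\|\big)$.

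It remains to control $|c|\,\|\xi\|$. From the formula (\ref{scoef}), $c = (w,\xi)/\|\xi\|^2$, and Cauchy--Schwarz gives $|c|\,\|\xi\| \leq \|w\|$ (here $\|\xi\| > 0$ is a fixed constant depending only on $\Omega$, since $\xi \not\equiv 0$ as noted after Lemma \ref{slem}). Combining with the first estimate, $\|u\|_{H^2(\Omega)} \leq C(\|w\| + \|w\|) \leq C\|f\|$, which finishes the proof. I do not expect any real obstacle here: the only point that needs a word of justification is the membership $u \in H^2(\Omega)\cap H^1_0(\Omega)$, which is why I would invoke either Theorem \ref{Mperp} or Theorem \ref{solequthm} before applying the biharmonic Poincaré inequality; everything else is Cauchy--Schwarz and the standard Poisson estimate.
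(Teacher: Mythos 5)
Your proposal is correct and follows essentially the same route as the paper: the standard a priori Poisson bound for $w$ (the paper phrases it via the isomorphism $-\Delta\colon H^1_0(\Omega)\to H^{-1}(\Omega)$, you via the energy estimate with test function $\phi=w$, which is the same thing), then the Grisvard inequality $\|u\|_{H^2(\Omega)}\leq C\|\Delta u\|$ together with $|c|\leq \|w\|/\|\xi\|$ from (\ref{scoef}). Your explicit justification that $u\in H^2(\Omega)\cap H^1_0(\Omega)$ via $w-c\xi\in\mathcal M$ is a point the paper leaves implicit (it appears in Theorem \ref{solequthm}), and including it is a reasonable touch.
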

\begin{proof}
The estimate (\ref{weakreg0}a) is a direct consequence of the fact that the Laplace operator is an isomorphism between $H^1_0(\Omega)$ and $H^{-1}(\Omega)$ and $\|f\|_{H^{-1}(\Omega)}\leq C\|f\|$. For $u \in H^2(\Omega) \cap H_0^1(\Omega)$, from \cite[Theorem 2.2.3]{Grisvard1992}, there exist a constant $C$ such that
$$
\|u\|_{H^2(\Omega)} \leq C\|\Delta u\|.
$$
Note that by (\ref{scoef}),  $|c|\leq \|w\|/\|\xi\|$. Therefore,
$$
\|\Delta u\| = \|w- c \xi\| \leq C(\|w\|+|c|\|\xi\|) \leq 2C\|w\|\leq 2C\|w\|_{H^1(\Omega)}.
$$
Then the estimate (\ref{weakreg0}b) is proved by (\ref{weakreg0}a) and the estimates above.
\end{proof}

\section{The finite element method}\label{sec-3}

In this section, we propose a linear $C^0$ finite element method solving the biharmonic problem (\ref{eqnbi}). Then we derive the finite element error analysis for the solution $u$ to show that our method shall achieve the optimal convergence rate especially when the domain is non-convex.

\subsection{The finite element algorithm}\label{fem}
 Let $\maT_n$ be a triangulation of $\Omega$ with shape-regular triangles and let $S_n\subset H^1_0(\Omega)$ be the $C^0$ Lagrange finite element space associated with $\maT_n$,
\be\label{eqn.fems}
S_n(\maT):=\{v\in C^0(\Omega) \cap H_0^1(\Omega): \ v|_T\in P_1, \ \forall T \in \maT_n\},
\ee
where $P_1$ is the space of polynomials of degree no more than $1$. Then we proceed to propose the  finite element algorithm.




\begin{algorithm}\label{femalg}
We define the finite element solution of  the biharmonic problem (\ref{eqnbi}) by utilizing the decoupling in (\ref{weaknew}) as follows.
\begin{itemize}
\item{Step 1.}  Find the finite element solution $w_n\in S_n$ of the Poisson equation
\be\label{femw}
A(w_n, \phi)=(f, \phi), \qquad \forall \phi\in S_n.
\ee
\item{Step 2.} With $s^-$ defined in (\ref{sl2}), we compute the finite element solution $\zeta_n\in S_n$ of the Poisson equation
\be\label{femb}
A(\zeta_n, \phi)=(\Delta s^-, \phi), \qquad \forall \phi\in S_n,
\ee
and set $\xi_n=\zeta_n+ s^-$.
\item{Step 3.} Find the coefficient $c_n\in \mathbb R$, such that
 \begin{eqnarray*}
\int_{\Omega} (w_n-c_n\xi_n)\xi_ndx=0,
\end{eqnarray*}
or equivalently, we compute the coefficient
\be\label{scoefh}
c_n=\frac{(w_n,\xi_n)}{\|\xi_n\|^2}.
\ee
\item{Step 4.} Find the finite element solution $u_n\in S_n$ of the Poisson equation
\be\label{femu}
A(u_n, \psi)=(w_n-c_n\xi_n, \psi), \qquad \forall \psi\in S_n.
\ee
\end{itemize}
\end{algorithm}

\begin{remark} The function $s^-$ in (\ref{sl2}) exists only in the presence of a reentrant corner. When the domain is convex,  we set $s^-=0$ and  Algorithm \ref{femalg} reduces to the usual mixed finite element algorithm for equation (\ref{eqnbi}). According to (\ref{femb}), $\zeta_n\in S_n$,  while $\xi_n \in L^2(\Omega)$ but $\xi_n \not\in S_n$. In practice, $\xi_n$ can  be approximated by using high-order quadrature rules. In addition, the finite element approximations in Algorithm \ref{femalg} are well defined based on the Lax-Milgram Theorem.
\end{remark}

\subsection{Optimal error estimates on quasi-uniform meshes} 

Suppose that the mesh $\maT_n$ consists of quasi-uniform triangles with size $h$. Recall the interpolation error estimate on  $\maT_n$ \cite{Ciarlet78} for any $v \in H^l(\Omega)$, $l>1$,
\be\label{interr}
\| v - v_I \|_{H^m(\Omega)} \leq Ch^{l-m}\|v\|_{H^l(\Omega)},
\ee
where $m= 0, 1$ and $v_I\in S_n$ represents the nodal interpolation of $v$.
For the Poisson equations (\ref{l2part}) and (\ref{eqnnew+}) in the polygonal domain with a reentrant corner, given $f\in L^2(\Omega)$, it is well known that $w, \zeta \in H^\alpha(\Omega)$ with $\alpha<1+\frac{\pi}{\omega}$ (see for example \cite{Grisvard1985, Grisvard1992}).
Recall  the finite element approximations $w_n$ and $\zeta_n$  in (\ref{femw}) and (\ref{femb}), respectively.
Due to the lack of regularity, the standard error estimate  \cite{Ciarlet78} yields
\be\label{wh1err}
\|w-w_n\|_{H^1(\Omega)} \leq Ch^{\alpha-1}\|w\|_{H^\alpha(\Omega)}, \quad \|\zeta-\zeta_n\|_{H^1(\Omega)} \leq Ch^{\alpha-1}\|\zeta\|_{H^\alpha(\Omega)}.
\ee
Note that $\xi-\xi_h=\zeta-\zeta_h \in H^\alpha(\Omega)$, and thus
$$
\|\xi-\xi_n\|_{H^1(\Omega)} \leq Ch^{\alpha-1}\|\zeta\|_{H^\alpha(\Omega)}.
$$

In addition, we have the following $L^2$ error analysis.

\begin{lemma}\label{lemwerrl2}
Given $w_h$ and $\xi_h$ in Algorithm \ref{femalg}, we have
\be\label{wl2err}
\|w-w_n\| \leq Ch^{2\alpha-2}\|w\|_{H^\alpha(\Omega)}, \quad \|\xi-\xi_n\| \leq Ch^{2\alpha-2}\|\zeta\|_{H^\alpha(\Omega)}.
\ee
\end{lemma}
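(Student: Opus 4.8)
The plan is to run the standard duality (Aubin--Nitsche) argument twice, once for $w$ and once for $\xi$, using the Galerkin orthogonality already built into Algorithm \ref{femalg}, the interpolation estimate (\ref{interr}), the $H^1$ estimates (\ref{wh1err}), and the elliptic regularity of the Dirichlet Laplacian on the polygon $\Omega$ with one reentrant corner. In each case the extra factor $h^{\alpha-1}$ over the $H^1$ rate arises because the dual problem, even with $L^2$ data, recovers only $H^\alpha$ regularity due to the reentrant corner; this is precisely why the exponent in (\ref{wl2err}) is $2\alpha-2=2(\alpha-1)$ and not $\alpha$.

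First I would handle $w$. Set $e:=w-w_n\in L^2(\Omega)$ and let $z\in H^1_0(\Omega)$ solve the dual Poisson problem $A(z,\psi)=(e,\psi)$ for all $\psi\in H^1_0(\Omega)$, i.e. $-\Delta z=e$ in $\Omega$, $z=0$ on $\partial\Omega$. Because $e\in L^2(\Omega)$, the same corner regularity theory that yields $w\in H^\alpha(\Omega)$ gives $z\in H^\alpha(\Omega)$ with $\|z\|_{H^\alpha(\Omega)}\le C\|e\|$, the constant $C$ independent of $e$. Subtracting (\ref{femw}) from (\ref{weaknew}a) gives $A(e,\phi)=0$ for all $\phi\in S_n$, so with $\psi=e$ and $z_I\in S_n$ the nodal interpolant of $z$,
\bes
\|e\|^2=A(z,e)=A(e,z-z_I)\le \|e\|_{H^1(\Omega)}\,\|z-z_I\|_{H^1(\Omega)}.
\ees
Now (\ref{interr}) with $l=\alpha$, $m=1$ bounds $\|z-z_I\|_{H^1(\Omega)}\le Ch^{\alpha-1}\|z\|_{H^\alpha(\Omega)}\le Ch^{\alpha-1}\|e\|$, while the first estimate of (\ref{wh1err}) bounds $\|e\|_{H^1(\Omega)}\le Ch^{\alpha-1}\|w\|_{H^\alpha(\Omega)}$; dividing by $\|e\|$ gives $\|w-w_n\|\le Ch^{2\alpha-2}\|w\|_{H^\alpha(\Omega)}$.

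For $\xi$, I would first observe that $\xi_n=\zeta_n+s^-$ and $\xi=\zeta+s^-$ share the same $s^-$, so $\xi-\xi_n=\zeta-\zeta_n$ and it suffices to estimate $\|\zeta-\zeta_n\|$. Since $\zeta$ satisfies the weak form $A(\zeta,\phi)=(\Delta s^-,\phi)$ of (\ref{l2part}) and $\zeta_n$ is defined by (\ref{femb}), one has $A(\zeta-\zeta_n,\phi)=0$ for all $\phi\in S_n$ and $\zeta\in H^\alpha(\Omega)$. Repeating the argument above verbatim with $e$ replaced by $\zeta-\zeta_n$ --- the associated dual solution again lying in $H^\alpha(\Omega)$ with norm $\le C\|\zeta-\zeta_n\|$ --- and using the second estimate of (\ref{wh1err}) gives $\|\xi-\xi_n\|=\|\zeta-\zeta_n\|\le Ch^{2\alpha-2}\|\zeta\|_{H^\alpha(\Omega)}$, which is the claim.

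The only step that is not purely routine is the $H^\alpha$-regularity with $L^2$ data for the dual problem: one needs that $-\Delta z=g$ with $z|_{\partial\Omega}=0$ and $g\in L^2(\Omega)$ forces $z\in H^\alpha(\Omega)$ for the same $\alpha<1+\pi/\omega$ with $\|z\|_{H^\alpha(\Omega)}\le C\|g\|$. This is exactly the corner regularity already used for $w$ and $\zeta$ (cf. \cite{Grisvard1985, Grisvard1992}), so no new ingredient is required; everything else --- Galerkin orthogonality, the Cauchy--Schwarz inequality for $A(\cdot,\cdot)$, the interpolation bound (\ref{interr}) and the $H^1$ estimates (\ref{wh1err}) --- is standard. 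I expect the only delicate bookkeeping to be checking that the generic constant $C$ depends on $\Omega$ alone and neither on $h$ nor on the functions $e$ and $z$.
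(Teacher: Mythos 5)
Your proposal is correct and follows essentially the same route as the paper: the paper invokes the Aubin--Nitsche Lemma (citing Theorem 3.2.4 of \cite{Ciarlet78}) together with the $H^\alpha$ regularity of the dual Poisson problem and the interpolation bound, exactly the duality argument you spell out by hand, and it likewise treats $\xi-\xi_n=\zeta-\zeta_n$ by the same reasoning. The only difference is that you unfold the Galerkin-orthogonality/duality computation explicitly rather than quoting the abstract lemma, which changes nothing of substance.
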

\begin{proof}
We only prove the error estimate for $w-w_h$, and the  estimate for $\xi-\xi_n$ can be obtained similarly. 
Consider the Poisson problem \be\label{dualVL2}
-\Delta v =g \text{ in } \Omega, \quad v =0 \text{ on } \partial \Omega,
\ee
where $g \in L^2(\Omega)$.
By the Aubin-Nitsche Lemma in \cite[Theorem 3.2.4]{Ciarlet78}, we have
\be\label{wel2}
\bal
\|w-w_h\| \leq  C \|w-w_h\|_{H^1(\Omega)} \sup_{g \in L^2(\Omega)} \left( \frac{\inf_{\phi \in S_n}\|v-\phi\|_{H^1(\Omega)}}{\|g\|}  \right).
\eal
\ee
Since
$
\|v\|_{H^\alpha(\Omega)} \leq C\|g\|,
$
we have
\be\label{projl2}
\bal
\inf_{\phi \in S_n}\|v-\phi\|_{H^1(\Omega)} \leq \|v-v_I\|_{H^1(\Omega)} \leq Ch^{\alpha-1}\|v\|_{H^\alpha(\Omega)}\leq Ch^{\alpha-1}\|g\|.
\eal
\ee
Combining  the estimates in (\ref{projl2}),  (\ref{wel2}), and (\ref{wh1err}), we have completed the proof.
\end{proof}

Next we carry out the error estimate for the finite element approximation $u_n$ in (\ref{femu}).
\begin{theorem}\label{thmuerr}
Let $u_n \in S_n$ be the finite element approximation to (\ref{femu}), and $u$ be the solution to the biharmonic problem (\ref{eqn.firstbi}). Then it follows
\bes
\|u-u_n\|_{H^1(\Omega)} \leq Ch.
\ees
\end{theorem}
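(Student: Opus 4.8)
The plan is to estimate $\|u-u_n\|_{H^1(\Omega)}$ by writing the error in terms of quantities already controlled: the interpolation error of $u$, and the $L^2$ errors of $w_n$ and $\xi_n$ established in Lemma \ref{lemwerrl2}. First I would introduce the auxiliary finite element function $\hat u_n \in S_n$ solving the ``clean'' discrete problem $A(\hat u_n,\psi) = (w-c\xi,\psi)$ for all $\psi \in S_n$, where $w,\xi,c$ are the exact data. Since $u \in H^2(\Omega)\cap H^1_0(\Omega)$ by Theorem \ref{solequthm} and Lemma \ref{lemsreg}, the standard C\'ea/interpolation estimate \eqref{interr} gives $\|u-\hat u_n\|_{H^1(\Omega)} \leq C\inf_{\phi\in S_n}\|u-\phi\|_{H^1(\Omega)} \leq Ch\|u\|_{H^2(\Omega)} \leq Ch\|f\|$. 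So it remains to bound $\|\hat u_n - u_n\|_{H^1(\Omega)}$, the ``data perturbation'' part.

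For the perturbation term, I would subtract the two discrete equations: for all $\psi\in S_n$,
\[
A(\hat u_n - u_n, \psi) = (w - c\xi, \psi) - (w_n - c_n\xi_n, \psi) = (w - w_n, \psi) - (c\xi - c_n\xi_n, \psi).
\]
Taking $\psi = \hat u_n - u_n$ and using coercivity of $A$ on $H^1_0(\Omega)$ (Poincar\'e), we get
\[
\|\hat u_n - u_n\|_{H^1(\Omega)} \leq C\big(\|w - w_n\| + \|c\xi - c_n\xi_n\|\big).
\]
By Lemma \ref{lemwerrl2}, $\|w-w_n\| \leq Ch^{2\alpha-2}\|w\|_{H^\alpha(\Omega)}$, and since $\alpha > 1$ (indeed $\alpha$ can be taken with $2\alpha - 2 > 0$, as $\alpha < 1+\pi/\omega$ with $\omega < 2\pi$ forces $\pi/\omega > 1/2$), this is $o(1)$ and in particular $\leq Ch$ is \emph{not} immediate — here I must be careful: $2\alpha-2$ can be as small as just above $0$, so I would instead only claim $\|w-w_n\| \to 0$ faster than $h^{0}$; but to get the clean bound $\leq Ch$ I should observe that the statement only asserts an $O(h)$ bound and $2\alpha-2 \geq$ something. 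Actually the safe route: by \eqref{wl2err} and the regularity $\|w\|_{H^\alpha}\leq C\|f\|$, one has $\|w-w_n\|\leq Ch^{2\alpha-2}\|f\|$, and since $\alpha$ may be chosen close to $1+\pi/\omega$ we have $2\alpha - 2$ close to $2\pi/\omega \geq 1$ (as $\omega \leq 2\pi$), hence $\|w-w_n\| \leq Ch\|f\|$. The same applies to $\|\xi-\xi_n\|$.

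For the term $\|c\xi - c_n\xi_n\|$, I would split it as $\|c\xi - c_n\xi_n\| \leq |c - c_n|\,\|\xi\| + |c_n|\,\|\xi - \xi_n\|$. The second piece is controlled once $|c_n|$ is shown bounded uniformly in $n$ (from \eqref{scoefh}, $|c_n| \leq \|w_n\|/\|\xi_n\| \leq C$ since $w_n \to w$ in $L^2$ and $\xi_n \to \xi \neq 0$ in $L^2$, so $\|\xi_n\|$ is bounded below for $n$ large). For the first piece, I would estimate $|c - c_n|$ directly from the formulas \eqref{scoef} and \eqref{scoefh}:
\[
c - c_n = \frac{(w,\xi)}{\|\xi\|^2} - \frac{(w_n,\xi_n)}{\|\xi_n\|^2},
\]
and by adding and subtracting $(w_n,\xi)/\|\xi\|^2$ etc., bound $|c-c_n| \leq C\big(\|w-w_n\| + \|\xi-\xi_n\|\big)$, again using the uniform lower bound on $\|\xi_n\|$ and uniform upper bounds on $\|w_n\|,\|\xi_n\|$. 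Collecting everything, $\|\hat u_n - u_n\|_{H^1(\Omega)} \leq C(\|w-w_n\| + \|\xi-\xi_n\|) \leq Ch^{2\alpha-2}\|f\| \leq Ch\|f\|$, and combining with the interpolation estimate for $\|u - \hat u_n\|_{H^1(\Omega)}$ finishes the proof.

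I expect the main obstacle to be the bookkeeping around $|c-c_n|$ and the uniform bounds on $\|\xi_n\|$ from below and $\|w_n\|,\|\xi_n\|$ from above — these are elementary but must be done carefully, since $\xi_n \notin S_n$ and is only an $L^2$ object approximated via $\zeta_n + s^-$. The one genuinely delicate point is confirming that $h^{2\alpha-2}$ really is $O(h)$: this hinges on $2\alpha - 2 \geq 1$, i.e. $\alpha \geq 3/2$, which holds because we may take any $\alpha < 1 + \pi/\omega$ and $\omega < 2\pi$ gives $1 + \pi/\omega > 3/2$; I would state this explicitly so the $O(h)$ rate in the theorem is justified.
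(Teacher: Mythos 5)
Your proposal is correct and follows essentially the same route as the paper: both reduce $\|u-u_n\|_{H^1(\Omega)}$ to the $H^2$-interpolation error of $u$ plus the $L^2$ errors $\|w-w_n\|$, $\|\xi-\xi_n\|$ and the coefficient error $|c-c_n|$ (handled via the lower bound on $\|\xi_n\|$ and the uniform bound on $|c_n|$), and both obtain the $O(h)$ rate by choosing $\alpha=3/2<1+\pi/\omega$ in Lemma \ref{lemwerrl2}. The only cosmetic difference is that you route the argument through an auxiliary Galerkin solution $\hat u_n$ with exact data $w-c\xi$ and apply C\'ea's lemma before a data-perturbation estimate, whereas the paper tests the combined error equation directly against $e=u_I-u_n$; the ingredients and resulting bounds are the same.
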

\begin{proof}
For any $\phi\in S_n$, based on (\ref{weaknew}b) and (\ref{femu}), we have
\ben
\bal
A(u, \phi)=& (w, \phi)-c(\xi, \phi),\\
A(u_n, \phi)=&(w_n, \phi)-c_n(\xi_n, \phi).
\eal
\een
Taking difference of the two equations above, we have
\be\label{soldiff}
\bal
A(u-u_n, \phi)=&(w-w_n, \phi)+c_n(\xi_n, \phi)-c(\xi, \phi)\\
=&(w-w_n, \phi)+c_n(\xi_n-\xi, \phi)+(c_n-c)(\xi, \phi).
\eal
\ee
Let $u_I\in S_n$ be the nodal interpolation of $u$. Set $\epsilon = u_I - u, \ e = u_I - u_n$ and take $\phi=e$ in (\ref{soldiff}). We have
\ben
\bal
A(e, e)=A(\epsilon,e) + (w-w_n, e)+c_n(\xi_n-\xi, e)+(c_n-c)(\xi, e).
\eal
\een
Thus, we have
$$
\|e\|_{H^1(\Omega)}^2 \leq C \left( \|\epsilon\|_{H^1(\Omega)} +  \|w-w_n\|_{H^{-1}(\Omega)} + |c_n|\|\xi_n-\xi\|_{H^{-1}(\Omega)} + |c-c_n|\|\xi\|_{H^{-1}(\Omega)}  \right) \|e\|_{H^1(\Omega)}.
$$
Using the triangle inequality and the inequality above, we have
\begin{eqnarray}\label{errbdd}
\|u-u_n\|_{H^1(\Omega)} &\leq& \|e\|_{H^1(\Omega)} +  \|\epsilon\|_{H^1(\Omega)}\nonumber\\
&\leq & C \left( \|\epsilon\|_{H^1(\Omega)} +  \|w-w_n\|_{H^{-1}(\Omega)} + |c_n|\|\xi_n-\xi\|_{H^{-1}(\Omega)} + |c-c_n|\|\xi\|_{H^{-1}(\Omega)}  \right)\nonumber\\
&\leq & C \left( \|\epsilon\|_{H^1(\Omega)} +  \|w-w_n\| + |c_n|\|\xi_n-\xi\| + |c-c_n|\|\xi\|  \right). \label{uh1err}
\end{eqnarray}
The last inequality is based on the fact that the $H^{-1}$ norm of an $L^2$ function  is bounded by its $L^2$ norm. We shall estimate every term in (\ref{uh1err}). Recall the solution $u\in H^2(\Omega)$. By the interpolation error estimate (\ref{interr}),
\be\label{uprojerr}
\|\epsilon\|_{H^1(\Omega)} = \|u-u_I\|_{H^1(\Omega)} \leq Ch\|u\|_{H^2(\Omega)}.
\ee
\tb{
Recall the angle of the reentrant corner $\pi<\omega<2\pi$.  Thus, choosing $\alpha=3/2<1+\frac{\pi}{\omega}$ in (\ref{wl2err}), we have
\be\label{eqn.11}
\|w-w_n\|\leq Ch, \quad \|\xi-\xi_n\|\leq Ch.
\ee
Recall that $\xi\not \equiv 0$ depends only on the domain $\Omega$, thus it follows
\be\label{xi+}
\|\xi\| > 0.
\ee
Moreover, when $h \leq h_0:=\min \left\{1, \sqrt[2\alpha-2]{\frac{\|\xi\|}{2C\|\xi\|_{H^\alpha(\Omega)}}}\right\}$, it follows from (\ref{wl2err}) that
\be\label{xibdd}
\frac{1}{2}\|\xi\| \leq \|\xi_n\|\leq \frac{3}{2}\|\xi\|,
\ee
so we have
\be\label{xinvbdd}
\|\xi_n\|^{-1} \leq C_0,
\ee
where the constant $C_0$ depends only on $\xi$ or $\Omega$. By setting $\phi = w_n$ in (\ref{femw}) and applying the Poincar\'{e} inequality, we obtain
\be\label{wnbdd}
\|w_n\|\leq C_1\|w_n\|_{H^1(\Omega)} \leq C_2\|f\|.
\ee
For the coefficient $|c_n|$ in (\ref{errbdd}), by (\ref{scoefh}), (\ref{xinvbdd}) and (\ref{wnbdd}), we have
\be\label{cbnd}
|c_n| \leq \frac{\|w_n\|}{\|\xi_n\|} \leq C\|f\|,
\ee
where $C$ is a constant depending on $\Omega$.
}
Subtracting (\ref{scoefh}) from (\ref{scoef}), we  obtain
$$
c-c_n = \frac{(w-w_n,\xi)}{\|\xi\|^2}+\frac{(\xi-\xi_n, w_n)}{\|\xi_n\|^2} + \frac{\|\xi_n\|^2-\|\xi\|^2}{\|\xi\|^2\|\xi_n\|^2}(w_n, \xi).
$$
\tb{By (\ref{xi+}), (\ref{xibdd}), (\ref{wnbdd}) and (\ref{eqn.11}), it follows}
\be\label{cdiffbnd}
\bal
|c-c_n| \leq \frac{1}{\|\xi\|}\|w-w_n\|+\frac{\|w_n\|}{\|\xi_n\|^2}\|\xi-\xi_n\|+  \frac{(\|\xi_n\|+\|\xi\|)\|w_n\|}{\|\xi\|\|\xi_n\|^2}\|\xi-\xi_n\| \leq Ch.
\eal
\ee
Then the proof is completed by plugging (\ref{uprojerr}), (\ref{cbnd}), (\ref{cdiffbnd}), and  (\ref{eqn.11}) 
into (\ref{uh1err}).
\end{proof}

\begin{remark}\label{rk35} The error estimate in Theorem \ref{thmuerr} shows that the proposed finite element algorithm (Algorithm \ref{femalg}) produces numerical solutions that converge to the solution of the biharmonic problem (\ref{eqnbi}) when the domain $\Omega$ is non-convex. In the case that $\Omega$ is convex, Algorithm \ref{femalg} reduces to the usual mixed finite element algorithm for equation (\ref{eqnbi}) that has proven to be effective \cite{MR2479119}. Therefore, Algorithm \ref{femalg} approximates the target equation in both convex and non-convex domains. On a quasi-uniform mesh, the convergence is first-order (optimal) for $u$ in the $H^1$ norm (Theorem \ref{thmuerr}) and sub-optimal  (\ref{wh1err}) for the auxiliary function $w$ in the $H^1$ norm. In Algorithm \ref{femalg}, one shall solve three Poisson problems. Given the availability of fast Poisson solvers, Algorithm \ref{femalg} is an relatively easy and cost effective alternative to existing algorithms solving (\ref{eqnbi}).
\end{remark}

\section{Optimal error estimates on graded meshes}\label{sec-4}
The numerical approximations from Algorithm \ref{femalg} are optimal for $u$ but only sub-optimal for $w$. It is largely due to the lack of regularity for  $w$. Recall that for $f\in L^2(\Omega)$, $w$ is merely in $H^{\alpha}(\Omega)$ for $0<\alpha<1+\frac{\pi}{\omega}$. In this section, we study the system (\ref{weaknew}) in a class of weighted Sobolev spaces and in turn propose graded triangulations that lead to numerical solutions converging in the optimal rate to both $u$ and $w$.

\subsection{Regularity in weighted Sobolev spaces}

We now  introduce  the Kondratiev-type weighted spaces for the analysis of the system (\ref{weaknew}).
\begin{definition} \label{wss} (Weighted Sobolev spaces) Recall that $Q$ is the vertex at the reentrant corner.
Let $r(x)$ be the distance from $x$ to $Q$.
For $a\in\mathbb R$, $m\geq 0$, and $G\subset \Omega$,  we define the weighted Sobolev space
$$
\maK_{a}^m(G) := \{v,\ r^{|\alpha|-a}\partial^\alpha v\in L^2(G),\ \forall |\alpha|\leq m \},
$$
where the multi-index $\alpha=(\alpha_1,\alpha_2)\in\mathbb Z^2_{\geq 0}$, $|\alpha|=\alpha_1+\alpha_2$, and $\partial^\alpha=\partial_{x}^{\alpha_1}\partial_{y}^{\alpha_2}$.
The $\maK_{a}^m(G)$ norm for $v$  is defined by
$$
\|v\|_{\maK_{a}^m(G)}=\big(\sum_{|\alpha|\leq m}\int_{G} |r^{|\alpha|-a}\partial^\alpha v|^2dx\big)^{\frac{1}{2}}.
$$
\end{definition}
\begin{remark}
According to Definition \ref{wss}, in the region that is away from the reentrant corner, the weighted space $\maK^m_a$ is equivalent to the Sobolev space $H^m$. In the neighborhood of $Q$, the space $\maK^m_a$ is the same Kondratiev space  \cite{Dauge88,Grisvard1985,Kondratiev67}. Recall the first equation in (\ref{eqnnew+}) that defines $w$. In the Dirichlet Poisson problem, the reentrant corner can give rise to singularities in $w$, such that $w\notin H^2(\Omega)$. It is the reason that the finite element approximation to $w$ on a quasi-uniform mesh is not optimal. The singularity in $w$ is however local and concentrates in the neighborhood of $Q$. Involving a proper weight function, the space $\maK^m_a$ may allow more singular functions and is an important tool for analyzing corner singularities.
\end{remark}


In the weighted Sobolev space, we have the following regularity result for the system (\ref{weaknew}).
\begin{lemma}\label{lemw}
Assume $a<\frac{\pi}{\omega}$ and $f \in L^2(\Omega)$. Recall $\zeta$ in (\ref{l2part}). Then it follows
\ben
\|\zeta\|_{\maK_{a+1}^2(\Omega)} \leq C \|\Delta s^-\|.
\een	
In addition, recall $w$ in (\ref{eqnnew+}). Then we have
\ben
\|w\|_{\maK_{a+1}^2(\Omega)} \leq  C \|f\|.
\een
\end{lemma}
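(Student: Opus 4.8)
The key observation is that both $\zeta$ and $w$ solve Dirichlet Poisson problems whose right-hand sides lie in $L^2(\Omega)$, and that the weighted-space regularity of the Dirichlet Laplacian in a polygon with a single reentrant corner of opening $\omega$ is a classical result of Kondratiev type (see \cite{Kondratiev67, Dauge88, Grisvard1985}): for the problem $-\Delta v = g$ in $\Omega$, $v = 0$ on $\partial\Omega$, one has $v \in \maK^2_{a+1}(\Omega)$ with $\|v\|_{\maK^2_{a+1}(\Omega)} \leq C\|g\|_{\maK^0_{a-1}(\Omega)}$ provided the weight exponent $a$ avoids the critical values $\lambda_k = k\pi/\omega$; since $a < \pi/\omega$ is the smallest positive such value, and we may also take $a > -\pi/\omega$ (the constraint $a < \pi/\omega$ being the binding one, as the statement allows), the hypothesis places us in the admissible range. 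The plan is therefore: first record this weighted a priori estimate for the Dirichlet problem; then apply it twice, once to $\zeta$ and once to $w$, checking in each case that the right-hand side has finite $\maK^0_{a-1}$ norm (which, since $a < \pi/\omega < 1$, is implied by — indeed, up to the domain being bounded, comparable to — finiteness of the $L^2$ norm, because $r^{1-a}$ is bounded on $\Omega$).

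First I would state the auxiliary regularity lemma for the Dirichlet Poisson problem in $\Omega$: if $g \in \maK^{0}_{a-1}(\Omega)$ with $-\pi/\omega < a < \pi/\omega$ and $-\Delta v = g$, $v|_{\partial\Omega}=0$, then $v \in \maK^2_{a+1}(\Omega)$ and $\|v\|_{\maK^2_{a+1}(\Omega)} \le C\|g\|_{\maK^0_{a-1}(\Omega)}$. This is standard and I would simply cite \cite{Grisvard1985, Dauge88} (and note that away from $Q$ it is interior elliptic regularity, while near $Q$ it is the Mellin-transform / Kondratiev argument, with the weight exponent $a+1$ permitted precisely because no exponent $\lambda_k$ lies in the interval $(-|a|, |a|)$... more precisely, because $a$ is not of the form $\pm k\pi/\omega$ and $|a| < \pi/\omega$).

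Second, I would apply this to $\zeta$, which solves \eqref{l2part}: $-\Delta\zeta = \Delta s^-$ in $\Omega$, $\zeta = 0$ on $\partial\Omega$. Since $\Delta s^- \in L^2(\Omega)$ (it is supported in the annulus $\tau R \le r \le R$, away from $Q$, and is smooth there) and $\Omega$ is bounded, $\|\Delta s^-\|_{\maK^0_{a-1}(\Omega)} = \|r^{1-a}\Delta s^-\| \le C\|\Delta s^-\|$ because $r^{1-a}$ is bounded on $\Omega$ (using $a < 1$). The auxiliary lemma then yields $\|\zeta\|_{\maK^2_{a+1}(\Omega)} \le C\|\Delta s^-\|_{\maK^0_{a-1}(\Omega)} \le C\|\Delta s^-\|$, which is the first claim. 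Third, I would apply the auxiliary lemma to $w$, which solves $-\Delta w = f$, $w|_{\partial\Omega} = 0$; again $\|f\|_{\maK^0_{a-1}(\Omega)} \le C\|f\|$ since $r^{1-a}$ is bounded, and the lemma gives $\|w\|_{\maK^2_{a+1}(\Omega)} \le C\|f\|$, the second claim.

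I do not expect a genuine obstacle here: the content is entirely a citation of Kondratiev regularity plus the trivial observation that $L^2 \hookrightarrow \maK^0_{a-1}$ on a bounded domain when $a<1$. The only point requiring a little care in the write-up is to state precisely which weight exponents are admissible — one must exclude $a = k\pi/\omega$ for integer $k$, and the hypothesis $a < \pi/\omega$ (together with the implicit freedom to keep $a$ bounded below, or simply to note $a+1 > 0$ suffices for the spaces to be sensible) guarantees admissibility; I would phrase the invocation of \cite{Grisvard1985} so that this condition is visibly met. A secondary point is to remark that $\zeta$ and $w$ are a priori only known to lie in $H^1_0(\Omega)$, so the weighted estimate should be read as an additional-regularity (not merely a priori) statement, which is exactly how the cited references present it.
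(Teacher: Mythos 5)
Your proposal is correct and follows essentially the same route as the paper: the paper's proof simply notes that $\Delta s^-, f \in L^2(\Omega) \subset \maK^0_{a-1}(\Omega)$ and invokes a weighted (Kondratiev-type) regularity theorem for the Dirichlet Laplacian (Theorem 3.3 of \cite{LN09}) applied to \eqref{l2part} and \eqref{eqnnew+}, exactly as you do with the classical references. Your added care about the admissible weight range and the explicit embedding $L^2(\Omega)\subset \maK^0_{a-1}(\Omega)$ via boundedness of $r^{1-a}$ is a harmless elaboration of the same argument.
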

\begin{proof}
Since $\Delta s^-, f\in L^2(\Omega)\subset \maK^{0}_{a-1}(\Omega)$,  the desired estimates follow by applying Theorem 3.3 of \cite{LN09} to equations (\ref{l2part}) and (\ref{eqnnew+}).
\end{proof}

\subsection{Graded meshes}

We now present the construction of graded meshes to improve the convergence rate of the numerical approximation from Algorithm \ref{femalg}.

\begin{algorithm}\label{graded} (Graded refinements) Let $\maT$ be a triangulation of $\Omega$ with shape-regular triangles.
 Recall that $Q$ is the vertex of $\Omega$ at the reentrant corner. It is clear that $Q$ is also a vertex in the triangulation $\mathcal T$.
Let ${pq}$ be an edge in the triangulation $\mathcal T$ with $p$ and $q$ as the endpoints. Then, in a graded refinement, a new node $r$ on $pq$ is produced according to the following conditions:
\begin{itemize}
\item[1.] (Neither $p$ or $q$ coincides with $Q$.) We choose $r$  as the midpoint ($|pr|=|qr|$).
\item[2.] ($p$ coincides with $Q$.) We choose $r$  such that $|pr|=\kappa|pq|$, where $\kappa\in (0, 0.5)$ is a parameter that will be specified later. See Figure \ref{fig.2} for example.
\end{itemize}
Then, the graded refinement, denoted by $\kappa(\mathcal T)$, proceeds as follows.
For each triangle $T\in \mathcal T$, a new node is generated on each edge of $T$ as described above. Then, $T$ is decomposed into four small triangles by connecting these new nodes (Figure \ref{fig.333}). Given an initial mesh $\mathcal T_0$ satisfying the condition above, the associated family of graded meshes $\{\mathcal T_n,\ n\geq0\}$ is defined recursively $\mathcal T_{n+1}=\kappa(\mathcal T_{n})$.
\end{algorithm}

\begin{figure}[h]
\includegraphics[scale=0.34]{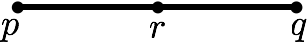}\hspace{3cm}\includegraphics[scale=0.34]{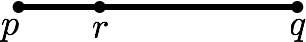}
\caption{The new node  on an edge $pq$ (left -- right): $p\neq Q$ and $q\neq Q$ (midpoint); $p=Q$  ($|pr|=\kappa|pq|$,  $\kappa<0.5$).}\label{fig.2} \end{figure}

\begin{figure}[h]
 \includegraphics[scale=0.34]{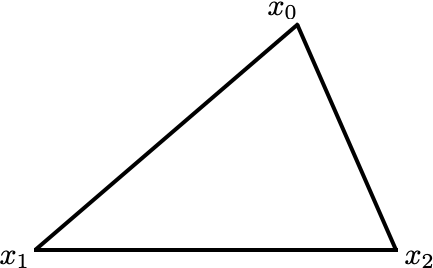}\hspace{0.5cm} \includegraphics[scale=0.34]{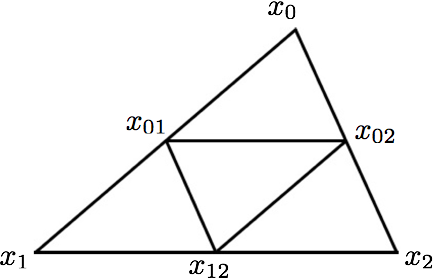}\\\hspace{0cm}\includegraphics[scale=0.34]{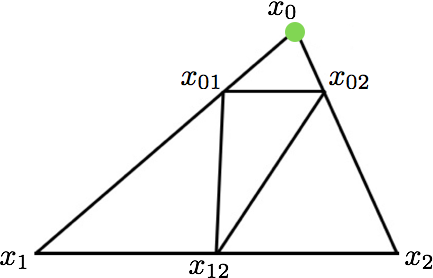}\hspace{.5cm}\includegraphics[scale=0.34]{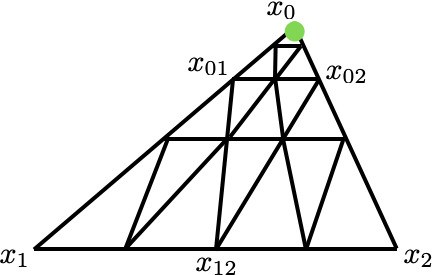}
\caption{Refinement of a triangle $\triangle x_0x_1x_2$. First row: (left -- right): the initial triangle and the midpoint refinement; second row: two consecutive graded refinements toward $x_0=Q$, ($\kappa<0.5$).}\label{fig.333}\end{figure}


Given a grading parameter $\kappa$, Algorithm \ref{graded} produces smaller elements near $Q$ for better approximation of singular solution. It is an explicit construction of graded meshes based on recursive refinements. See also \cite{NistorBZ052, LMN10, LN09} and references therein for more discussions on the graded mesh. Note that after $n$ refinements, the number of triangles in the mesh $\maT_n$ is $O(4^n)$.

\subsection{Optimal error estimates on graded meshes}
In the rest of this section, we shall show that with a proper selection of the grading parameter $\kappa$,  the proposed numerical solutions $u_n$ and $w_n$ converge to the solutions $u$ and $w$ of \eqref{eqnnew+} in the optimal rate on graded meshes. Recall the finite element space $S_n$ in (\ref{eqn.fems})  associated with the graded mesh $\maT_n$.

We first recall the following interpolation error estimates \cite{LN09} for functions in the weighted space.
\begin{lemma}\label{interpolation}
Let $0<a<\frac{\pi}{\omega}$ and choose the grading parameter $\kappa= 2^{-1/a}$. Define $h:=2^{-n}$.  Then  for any $v \in \maK_{a+1}^2(\Omega)$, it follows
\ben
\|v-v_I\|_{H^1(\Omega)} \leq Ch \|v\|_{\maK_{a+1}^2(\Omega)},
\een
where $v_I$ be the nodal interpolation of $v$ associated with $\maT_n$.
\end{lemma}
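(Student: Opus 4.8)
The plan is to reduce the global estimate to a sum of purely local, element-by-element estimates, grouping the triangles of $\maT_n$ into dyadic layers around the reentrant vertex $Q$ and rescaling each layer to a configuration of unit size on which the weight $r^{|\alpha|-a-1}$ is essentially constant and the ordinary affine interpolation estimate applies; this is the standard graded-mesh argument of \cite{LN09} specialized to piecewise linear elements. First I would dispose of the triangles $T\in\maT_n$ whose closure does not touch $Q$: on such $T$ only midpoint refinements have been applied, so $\diam(T)\sim h=2^{-n}$ (up to the fixed coarse mesh size) and the distance $r(x)$ is comparable on $T$ to the positive constant $\mathrm{dist}(T,Q)$, whence $\|\cdot\|_{\maK_{a+1}^2(T)}$ and $\|\cdot\|_{H^2(T)}$ are equivalent and the classical bound $\|v-v_I\|_{H^1(T)}\le C\diam(T)\,|v|_{H^2(T)}$ gives, after absorbing the fixed weight, $\|v-v_I\|_{H^1(T)}\le Ch\,\|v\|_{\maK_{a+1}^2(T)}$. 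The triangles accumulating at $Q$ I would organize into layers $\Omega_j=\{x:\kappa^{j+1}\le r(x)\le\kappa^{j}\}$ for $j=0,\dots,n-1$, together with the innermost patch $\Omega_n=\{r(x)\le\kappa^{n}\}$; a triangle $T\subset\Omega_j$ has been produced by $j$ rounds of refinement graded toward $Q$ (placing it at scale $\kappa^{j}$) followed by $n-j$ midpoint refinements, so $\diam(T)\sim\kappa^{j}2^{-(n-j)}$ while $r\sim\kappa^{j}$ throughout $T$.

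On each $T\subset\Omega_j$ with $1\le j\le n-1$ (and, by the same reasoning, on $\Omega_0$), I would map $T$ affinely onto a reference triangle $\hat T$ of unit size, with linear part of size $\sim\diam(T)$, set $\hat v=v\circ\Phi$, and invoke the Bramble--Hilbert form of the interpolation estimate $\|\hat v-\hat v_I\|_{H^1(\hat T)}\le C|\hat v|_{H^2(\hat T)}$, valid since $v_I$ reproduces $P_1$. Scaling back (and using $\diam(T)\le 1$, so the $L^2$ part is even smaller) gives $\|v-v_I\|_{H^1(T)}\le C\diam(T)\,|v|_{H^2(T)}$. Since $r\sim\kappa^{j}$ on $T$ we have $|v|_{H^2(T)}\le C\kappa^{j(a-1)}\|r^{1-a}D^2v\|_{L^2(T)}$, so that $\|v-v_I\|_{H^1(T)}\le C\kappa^{j}2^{-(n-j)}\kappa^{j(a-1)}\|r^{1-a}D^2v\|_{L^2(T)}=C\kappa^{ja}2^{-(n-j)}\|r^{1-a}D^2v\|_{L^2(T)}$. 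The point of the choice $\kappa=2^{-1/a}$ is exactly that $\kappa^{ja}=2^{-j}$, so the prefactor collapses to $2^{-j}2^{-(n-j)}=2^{-n}=h$, uniformly in $j$. Squaring, summing over the triangles in a fixed layer and then over $j$ (the layer contributions to $\|r^{1-a}D^2v\|_{L^2(\Omega)}^2$ add up rather than multiply), and combining with the away-from-$Q$ bound, yields $\|v-v_I\|_{H^1(\Omega)}\le Ch\,\|v\|_{\maK_{a+1}^2(\Omega)}$.

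The one genuinely delicate point, which I expect to be the main obstacle, is the innermost patch $\Omega_n$, where $r$ is not bounded below so the plain $H^2$ scaling argument does not apply verbatim. I would handle it by first observing that $a>0$ forces $v$ to vanish at $Q$ and to be continuous there — this follows in two dimensions from $r^{-a-1}v\in L^2$ near $Q$ — so the nodal value of $v$ at $Q$ used by $v_I$ is meaningful and equals zero; then I rescale $\Omega_n$ by $\kappa^{n}$ to unit size (sending $Q$ to the origin) and apply a weighted Bramble--Hilbert inequality of the form $\|\hat v-\hat v(0)\|_{H^1(\hat\Omega_n)}\le C\|\hat r^{\,1-a}D^2\hat v\|_{L^2(\hat\Omega_n)}$, which is available precisely because $0<a<\pi/\omega<1$. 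Tracking the scaling of the weighted norm under the dilation by $\kappa^{n}$ produces exactly the same factor $h=2^{-n}$ as in a generic layer, so $\Omega_n$ contributes on the same footing; alternatively the estimate on $\Omega_n$ can simply be quoted from \cite{LN09}. Throughout, the constant $C$ depends only on $\omega$, $a$, the shape-regularity of $\maT_0$ and the coarse mesh, as asserted.
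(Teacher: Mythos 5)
The paper itself does not prove this lemma; it simply recalls it from \cite{LN09}, so your proposal is being measured against the argument of that reference rather than against anything in the text. Your reconstruction is in fact the standard graded-mesh argument and its core is sound: decomposing into dyadic layers $\Omega_j$, noting $\diam(T)\sim\kappa^j2^{-(n-j)}$ and $r\sim\kappa^j$ on layer-$j$ triangles, converting $|v|_{H^2(T)}$ into the weighted seminorm at cost $\kappa^{j(a-1)}$ (legitimate since $a<\pi/\omega<1$), and observing that $\kappa=2^{-1/a}$ makes the prefactor $\kappa^{ja}2^{-(n-j)}=2^{-n}=h$ uniformly in $j$, is exactly the mechanism behind the cited result. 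One cosmetic slip: triangles that do not touch $Q$ but lie in an inner layer have \emph{not} undergone only midpoint refinements (their layer-$j$ ancestor was created by a graded split), so their diameter is $\kappa^j2^{-(n-j)}$, not $\sim h$; but since your layer analysis covers $j=0,\dots,n-1$ this does no harm.

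The genuine weak point is the vertex patch $\Omega_n$, and there your stated inequality $\|\hat v-\hat v(0)\|_{H^1(\hat\Omega_n)}\le C\|\hat r^{\,1-a}D^2\hat v\|_{L^2(\hat\Omega_n)}$ is false: any nonzero linear function vanishing at the origin has zero right-hand side and nonzero left-hand side, and moreover the quantity you actually need to bound is $\hat v-\hat v_I$, not $\hat v-\hat v(0)$. The standard repair does not require any second-difference gain on the unit patch at all: since $0<a<1$, one has $\|\hat v\|_{H^1(\hat\Omega_n)}\le C\|\hat v\|_{\maK^2_{a+1}(\hat\Omega_n)}$ and, using $v(Q)=0$ (your continuity argument for this is correct) together with control of the two non-vertex nodal values by the weighted norm away from the origin, also $\|\hat v_I\|_{H^1(\hat\Omega_n)}\le C\|\hat v\|_{\maK^2_{a+1}(\hat\Omega_n)}$; the crude triangle inequality then gives $\|\hat v-\hat v_I\|_{H^1(\hat\Omega_n)}\le C\|\hat v\|_{\maK^2_{a+1}(\hat\Omega_n)}$, and the factor $h$ comes entirely from the anisotropic scaling of the $\maK^2_{a+1}$ norm under dilation by $\kappa^n$, namely $\|v\|_{\maK^2_{a+1}(\Omega_n)}=\kappa^{-na}\|\hat v\|_{\maK^2_{a+1}(\hat\Omega_n)}$ with $\kappa^{na}=2^{-n}=h$ — a computation you already carry out correctly. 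With that substitution (or by quoting the vertex-patch lemma of \cite{LN09} outright, as you suggest), the proof is complete.
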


\tb{Recall that $w, \zeta \in H^\alpha(\Omega)$ with $\alpha<1+\frac{\pi}{\omega}$, so the error estimates in (\ref{wh1err}) and (\ref{wl2err}) are still valid on graded meshes with $0<\kappa<\frac{1}{2}$, but it is suboptimal. For the optimal error estimates of $w-w_n$ and $\xi-\xi_n$ on graded meshes, we have the following result.}
\begin{lemma}\label{glemwerrl2} Let $0<a<\frac{\pi}{\omega}$ and choose $\kappa= 2^{-1/a}$. Then
for the approximations $w_n$ and $\xi_n$ defined in (\ref{femw}) and (\ref{femb}),
 it follows
\begin{subequations}\label{gwl2err}
\begin{align}
\|w-w_n\|_{H^1(\Omega)} \leq Ch\|w\|_{\maK_{a+1}^2(\Omega)},& \quad \|w-w_n\| \leq Ch^2\|w\|_{\maK_{a+1}^2(\Omega)},\\
\|\xi-\xi_n\|_{H^1(\Omega)} \leq Ch\|\zeta\|_{\maK_{a+1}^2(\Omega)},& \quad \|\xi-\xi_n\| \leq Ch^2\|\zeta\|_{\maK_{a+1}^2(\Omega)},
\end{align}
\end{subequations}
where  $h :=2^{-n}$.
\end{lemma}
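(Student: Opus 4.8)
The plan is to reduce both estimates to two ingredients that are already available: the weighted regularity of Lemma \ref{lemw} (which places $w$ and $\zeta$ in $\maK_{a+1}^2(\Omega)$) and the weighted interpolation estimate of Lemma \ref{interpolation} (which is where the choice $\kappa = 2^{-1/a}$ and $h := 2^{-n}$ actually enters). With these in hand, the $H^1$ bounds follow from the quasi-optimality of the Galerkin method, and the $L^2$ bounds follow from a duality (Aubin--Nitsche) argument run on the graded mesh $\maT_n$. I would also note at the outset that $\xi - \xi_n = (\zeta + s^-) - (\zeta_n + s^-) = \zeta - \zeta_n$, so the estimates for $\xi - \xi_n$ are literally the estimates for $\zeta - \zeta_n$ and need no separate treatment.

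First, the $H^1$ estimates. Since $w_n$ is the $A(\cdot,\cdot)$-Galerkin approximation of $w$ in $S_n$, quasi-optimality gives $\|w - w_n\|_{H^1(\Omega)} \le C\inf_{\phi \in S_n}\|w - \phi\|_{H^1(\Omega)} \le C\|w - w_I\|_{H^1(\Omega)}$. By Lemma \ref{lemw}, $w \in \maK_{a+1}^2(\Omega)$, so Lemma \ref{interpolation} yields $\|w - w_I\|_{H^1(\Omega)} \le Ch\|w\|_{\maK_{a+1}^2(\Omega)}$, which is the first estimate in (\ref{gwl2err}a). The same argument applied to $\zeta$ and $\zeta_n$ from (\ref{l2part}) and (\ref{femb}), using $\zeta \in \maK_{a+1}^2(\Omega)$, gives $\|\xi - \xi_n\|_{H^1(\Omega)} = \|\zeta - \zeta_n\|_{H^1(\Omega)} \le Ch\|\zeta\|_{\maK_{a+1}^2(\Omega)}$, the first estimate in (\ref{gwl2err}b).

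Next, the $L^2$ estimates by duality. Given $g \in L^2(\Omega)$, let $v \in H_0^1(\Omega)$ solve $-\Delta v = g$ in $\Omega$ with $v = 0$ on $\partial\Omega$. Since $g \in L^2(\Omega) \subset \maK_{a-1}^0(\Omega)$, the weighted regularity behind Lemma \ref{lemw} gives $v \in \maK_{a+1}^2(\Omega)$ with $\|v\|_{\maK_{a+1}^2(\Omega)} \le C\|g\|$, and hence $\inf_{\phi \in S_n}\|v - \phi\|_{H^1(\Omega)} \le \|v - v_I\|_{H^1(\Omega)} \le Ch\|g\|$ by Lemma \ref{interpolation}. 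Inserting this into the Aubin--Nitsche estimate \cite[Theorem 3.2.4]{Ciarlet78}, together with the graded-mesh $H^1$ bound just proved,
\bes
\bal
\|w - w_n\| \le &\ C\|w - w_n\|_{H^1(\Omega)}\, \sup_{g \in L^2(\Omega)} \frac{\inf_{\phi \in S_n}\|v - \phi\|_{H^1(\Omega)}}{\|g\|}\\
\le &\ Ch\cdot h\,\|w\|_{\maK_{a+1}^2(\Omega)} = Ch^2\|w\|_{\maK_{a+1}^2(\Omega)},
\eal
\ees
which is the second estimate in (\ref{gwl2err}a); the identical computation with $\zeta$ in place of $w$, together with $\xi - \xi_n = \zeta - \zeta_n$, yields the second estimate in (\ref{gwl2err}b).

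I do not expect a serious obstacle here, since the proof is essentially bookkeeping once Lemmas \ref{lemw} and \ref{interpolation} are in place. The one point that needs care is the duality step: the auxiliary problem $-\Delta v = g$ does \emph{not} enjoy the usual $H^2$ regularity at the reentrant corner, so one must genuinely invoke the Kondratiev-space estimate (Theorem 3.3 of \cite{LN09}, as used in Lemma \ref{lemw}) for the dual solution $v$, and check that $\maK_{a+1}^2(\Omega)$-regularity of $v$ is compatible with the \emph{same} grading parameter $\kappa = 2^{-1/a}$ used for $w$ and $\zeta$. This is automatic because the admissible range $0 < a < \pi/\omega$ is the same for all three problems, so a single $\kappa$ and a single mesh family $\maT_n$ serve simultaneously for the primal and the dual approximations.
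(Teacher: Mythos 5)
Your proposal is correct and follows essentially the same route as the paper's own proof: C\'ea quasi-optimality combined with Lemma \ref{lemw} and the graded-mesh interpolation estimate of Lemma \ref{interpolation} for the $H^1$ bounds, and the Aubin--Nitsche duality argument with the Kondratiev regularity $\|v\|_{\maK_{a+1}^2(\Omega)} \le C\|g\|$ of the dual solution for the $L^2$ bounds. Your explicit observation that $\xi-\xi_n=\zeta-\zeta_n$ simply makes precise what the paper leaves as ``the estimates for $\xi-\xi_n$ follow similarly.''
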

\begin{proof}
We only prove the error estimates for $w-w_n$ and the estimates for $\xi-\xi_n$ will follow similarly.
Let $w_I\in S_n$ be the nodal interpolation of $w$.
Using the regularity estimate in Lemma \ref{lemw} and the interpolation error estimate in Lemma \ref{interpolation}, we derive the analysis for  $\|w-w_n\|_{H^1(\Omega)}$  as follows:
$$
\|w-w_n\|_{H^1(\Omega)}\leq C\|w-w_I\|_{H^1(\Omega)}\leq Ch\|w\|_{\maK_{a+1}^2(\Omega)}.
$$

Meanwhile, applying the Aubin-Nitsche Lemma to (\ref{dualVL2}) again, we have
\be\label{gwel2}
\bal
\|w-w_n\| \leq  C \|w-w_n\|_{H^1(\Omega)} \sup_{g \in L^2(\Omega)} \left( \frac{\inf_{\phi \in S_n}\|v-\phi\|_{H^1(\Omega)}}{\|g\|}  \right).
\eal
\ee
Based on the regularity estimate in Lemma \ref{lemw}, the function $v$ in (\ref{dualVL2}) satisfies
$$
\|v\|_{\maK_{a+1}^{2}(\Omega)} \leq C \|g\|.
$$
Together with Lemma \ref{interpolation}, we have
\be\label{gprojl2}
\bal
\inf_{\phi \in S_n}\|v-\phi\|_{H^1(\Omega)} \leq \|v-v_I\|_{H^1(\Omega)} \leq Ch\|v\|_{\maK_{a+1}^{2}(\Omega)}\leq Ch\|g\|,
\eal
\ee
where $v_I$ be the nodal interpolation of $v$ associated with $\maT_n$.
Plugging (\ref{gprojl2}) into (\ref{gwel2}) leads to the desired error estimate for $\|w-w_n\|$.
\end{proof}

We conclude this section by the $H^1$ error estimate for the solution $u$ of the biharmonic problem (\ref{eqnbi}) on graded meshes.

\begin{theorem}\label{gthmuerr} 
\tb{Let $0<\kappa<\frac{1}{2}$ for the mesh $\maT_n$.}
Let $u_n$ be the finite element approximation to $u$ that is defined in Algorithm \ref{femalg}. Then it follows
\bes
\|u-u_n\|_{H^1(\Omega)} \leq Ch,
\ees
where $h:=2^{-n}$.
\end{theorem}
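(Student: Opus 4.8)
The plan is to reuse the abstract error decomposition obtained in the proof of Theorem \ref{thmuerr}, namely the bound
\be\label{plan-uh1err}
\|u-u_n\|_{H^1(\Omega)} \leq C \left( \|u-u_I\|_{H^1(\Omega)} + \|w-w_n\| + |c_n|\,\|\xi-\xi_n\| + |c-c_n|\,\|\xi\| \right),
\ee
which was derived purely from the weak formulations \eqref{weaknew} and the finite element equations \eqref{femu}, and hence is valid verbatim on the graded mesh $\maT_n$ with any $0<\kappa<\tfrac12$. So the task reduces to re-estimating each of the four terms on the right of \eqref{plan-uh1err} in the graded setting and checking that each is $O(h)$ with $h:=2^{-n}$.

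First I would handle the interpolation term. Since $u\in H^2(\Omega)$ by Lemma \ref{lemsreg}, and since any graded mesh with $0<\kappa<\tfrac12$ is still a shape-regular family for which the standard nodal interpolation estimate \eqref{interr} holds for $H^2$ functions (the grading only makes elements near $Q$ smaller, which does not hurt the estimate for a globally $H^2$ function), we get $\|u-u_I\|_{H^1(\Omega)}\leq Ch\|u\|_{H^2(\Omega)}\leq Ch\|f\|$. Next, for $\|w-w_n\|$ and $\|\xi-\xi_n\|$: if one additionally takes $\kappa = 2^{-1/a}$ with $0<a<\tfrac{\pi}{\omega}$, Lemma \ref{glemwerrl2} gives the sharper bounds $\|w-w_n\|\leq Ch^2\|w\|_{\maK_{a+1}^2(\Omega)}$ and $\|\xi-\xi_n\|\leq Ch^2\|\zeta\|_{\maK_{a+1}^2(\Omega)}$; alternatively, for a general $0<\kappa<\tfrac12$ the suboptimal bounds \eqref{wl2err} remain valid (as remarked before Lemma \ref{glemwerrl2}), and choosing $\alpha=3/2$ there already yields $\|w-w_n\|\leq Ch$ and $\|\xi-\xi_n\|\leq Ch$, which is all that is needed. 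In either case both are $O(h)$.

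It then remains to control the coefficients exactly as in Theorem \ref{thmuerr}. The bounds $\|\xi\|>0$, $\|w_n\|\leq C\|f\|$ (from testing \eqref{femw} with $\phi=w_n$ and Poincar\'e), $\tfrac12\|\xi\|\leq\|\xi_n\|\leq\tfrac32\|\xi\|$ for $h$ small, and hence $|c_n|\leq \|w_n\|/\|\xi_n\|\leq C\|f\|$, all transfer directly since they rest only on the $L^2$-convergence $\|\xi-\xi_n\|\to 0$ and the Poincar\'e inequality, both available on the graded mesh. Likewise the identity
\bes
c-c_n = \frac{(w-w_n,\xi)}{\|\xi\|^2}+\frac{(\xi-\xi_n, w_n)}{\|\xi_n\|^2} + \frac{\|\xi_n\|^2-\|\xi\|^2}{\|\xi\|^2\|\xi_n\|^2}(w_n, \xi)
\ees
together with $\|w-w_n\|=O(h)$, $\|\xi-\xi_n\|=O(h)$, $\|w_n\|\leq C\|f\|$, and the two-sided bound on $\|\xi_n\|$ gives $|c-c_n|\leq Ch$. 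Substituting $\|u-u_I\|_{H^1(\Omega)}\leq Ch$, $\|w-w_n\|\leq Ch$, $|c_n|\,\|\xi-\xi_n\|\leq Ch$, and $|c-c_n|\,\|\xi\|\leq Ch$ into \eqref{plan-uh1err} yields $\|u-u_n\|_{H^1(\Omega)}\leq Ch$. The only mildly delicate point — and the thing worth spelling out carefully — is the bookkeeping of which estimates require the special grading $\kappa=2^{-1/a}$ (needed only if one wants the genuinely second-order $L^2$ rates for $w$ and $\xi$) versus which hold for the stated hypothesis $0<\kappa<\tfrac12$; since the theorem only claims an $O(h)$ rate for $u$, the suboptimal $L^2$ bounds suffice and no special grading is actually required, which is exactly why the hypothesis is merely $0<\kappa<\tfrac12$.
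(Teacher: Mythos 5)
Your proposal is correct and follows essentially the same route as the paper: reuse the error decomposition from Theorem \ref{thmuerr}, apply the $H^2$ interpolation estimate \eqref{guprojerr} (valid on shape-regular graded meshes with $0<\kappa<\tfrac12$), invoke the suboptimal $L^2$ bounds \eqref{wl2err} (which, as remarked before Lemma \ref{glemwerrl2}, persist on graded meshes) together with the coefficient bounds \eqref{cbnd}--\eqref{cdiffbnd}. Your explicit remark that the special grading $\kappa=2^{-1/a}$ is only needed for the optimal rates for $w$ and $\xi$, not for the $O(h)$ bound on $u$, matches exactly why the paper states the theorem under the mere hypothesis $0<\kappa<\tfrac12$.
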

\begin{proof}
Let $u_I$ be the nodal interpolation of $u$ associated with $\maT_n$. Similar to the analysis in Theorem \ref{thmuerr} on quasi-uniform meshes, we have
\be\label{guh1err}
\bal
\|u-u_n\|_{H^1(\Omega)} \leq   C \left( \|u- u_I\|_{H^1(\Omega)} +  \|w-w_n\| + \frac{\|w_n\|}{\|\xi_n\|}\|\xi_n-\xi\| + |c-c_n|\|\xi\|_{H^{-1}(\Omega)}  \right).
\eal
\ee
For $0<\kappa<\frac{1}{2}$, the following interpolation error still holds
\be\label{guprojerr}
\|u-u_I\|_{H^1(\Omega)} \leq Ch\|u\|_{H^2(\Omega)}.
\ee
Thus, the proof is completed by combining the estimates in (\ref{guh1err}), (\ref{guprojerr}), (\ref{cdiffbnd}), and the $L^2$ error estimates for $w-w_n$ and $\xi-\xi_n$ in (\ref{wl2err}).
\end{proof}

\begin{remark} According to Theorem \ref{thmuerr} and Theorem \ref{gthmuerr}, the numerical solution $u_n$ in Algorithm  \ref{femalg}  approximates the solution $u$ of the biharmonic problem in the optimal $H^1$ convergence rate on quasi-uniform meshes and also on graded meshes defined in Algorithm \ref{graded}. Meanwhile, a proper graded mesh can improve the effectiveness in approximating the auxiliary function $w$ in (\ref{eqnnew+}). In particular, selecting the grading parameter $\kappa$ as in Lemma \ref{glemwerrl2}, the proposed finite element solution $w_n$ converges to $w$ in both $H^1$ and $L^2$ norms with the optimal rate on graded meshes. Nonetheless, the numerical approximations $u_n$ and $w_n$ from Algorithm \ref{femalg} converge to $u$ and $w$ in both convex and non-convex domains. The graded mesh can improve the convergence rate  but does not make divergent numerical solutions convergent.
 We also point out that when high-order finite element methods are used  in Algorithm \ref{femalg}, new graded meshes are needed to recover the optimal $H^1$ convergence rate for both $u$ and $w$. We shall study these cases in future works.
\end{remark}

\section{Numerical illustrations}\label{sec-5}
In this section, we present numerical test results to validate our theoretical predictions for the proposed finite element method solving equation (\ref{eqnbi}). Since the solutions $u, w$ in (\ref{eqnnew+}) are unknown, we use the following numerical convergence rate
\begin{eqnarray}\label{rate}
{\mathcal R}=\log_2\frac{|v_j-v_{j-1}|_{H^1(\Omega)}}{|v_{j+1}-v_j|_{H^1(\Omega)}}
\end{eqnarray}
as an  indicator of the actual convergence rate.
Here $v_j$ denotes the finite element solution on the  mesh $\mathcal T_j$ obtained after $j$  refinements of the initial triangulation $\mathcal T_0$. It can be either $u_j$ or $w_j$ depending on the underlying Poisson problem. In particular, suppose the actual convergence rate is $\|v-v_j\|_{H^1(\Omega)}=O(h^\beta)$ for $\beta>0$. Then for the $P_1$ finite element method, the rate in (\ref{rate}) is a good approximation of the exponent $\beta$ as the level of refinements $j$ increases  \cite{LN18}.
%

We shall use the solution of the $C^0$ interior penalty method  in FEniCS \cite{fenics} as a reference solution. More specifically, we use the penalty method based on $P_2$ polynomials with the penalty parameter $\eta=24$.  The reference solution is computed on the mesh after seven mesh refinements of the given initial mesh and is denoted by $u_R$. Since the $C^0$ interior penalty method leads to numerical solutions converging to the solution $u$ regardless of the convexity of the domain, we can use $u_R$ as a good approximation of $u$. We point out that given the same triangulation, the implementation of the proposed finite element algorithm (Algorithm \ref{femalg}) can be much faster than that of the $C^0$ interior penalty method, due  to the availability of fast Poisson solvers.

\begin{example}\label{P1S} (A convex domain).
We consider the problem (\ref{eqnbi}) with $f=10$ in the square domain $\Omega=(0,2)^2$. Since all the vertices have angles less than $\pi$,  Algorithm \ref{femalg} coincides with the usual mixed finite element method based on the  formulations in (\ref{weaksys}).

We solve this problem using Algorithm \ref{femalg} on uniform meshes obtained by midpoint refinements with the initial mesh  given in Figure \ref{Mesh_Init0}(a). The finite element solution $u_7$ and the difference $|u_R-u_7|$ are shown in Figures \ref{Mesh_Init0}(c) and \ref{Mesh_Init0}(d), respectively. The convergence rates (\ref{rate}) for $u_j$ and $w_j$ on a sequence of uniform meshes are shown in Table \ref{TabConRateS}. We see that the solution of the mixed finite element method  converges to the solution  of the biharmonic equation (\ref{eqnbi}) and the optimal  convergence rate ($\mathcal R=1$)  is achieved for both the numerical solution $u_j$ and the auxiliary finite element solution $w_j$. This is consistent with our expectation (Remark \ref{rk26}) for the problem in a convex domain.

\begin{figure}[h]
\centering
\subfigure[]{\includegraphics[width=0.222\textwidth]{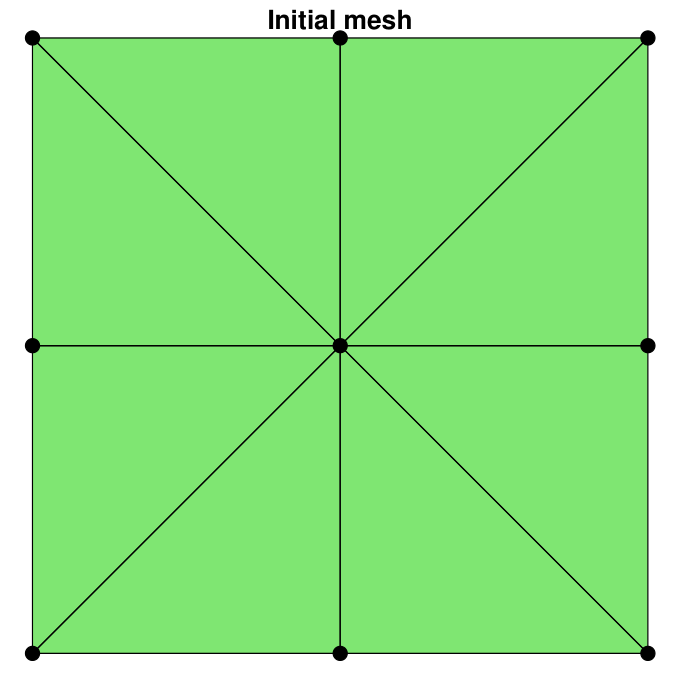}}
\subfigure[]{\includegraphics[width=0.222\textwidth]{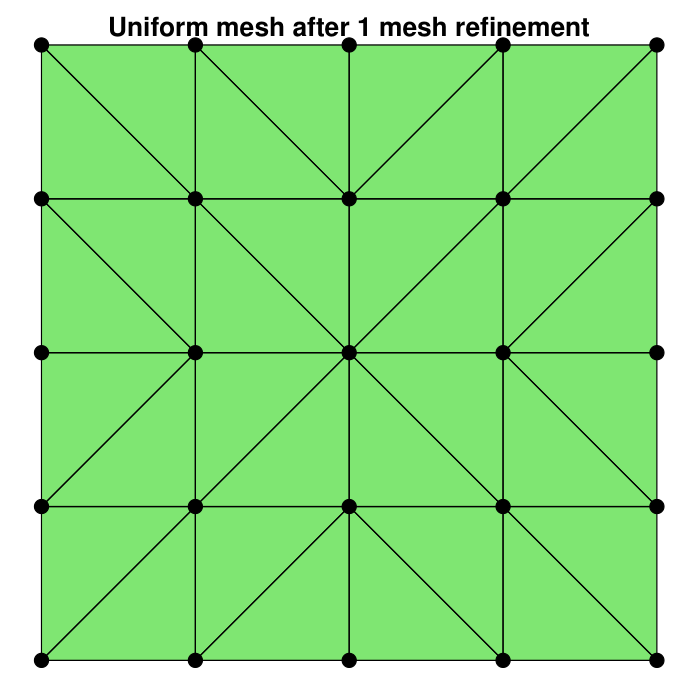}}
\subfigure[]{\includegraphics[width=0.252\textwidth]{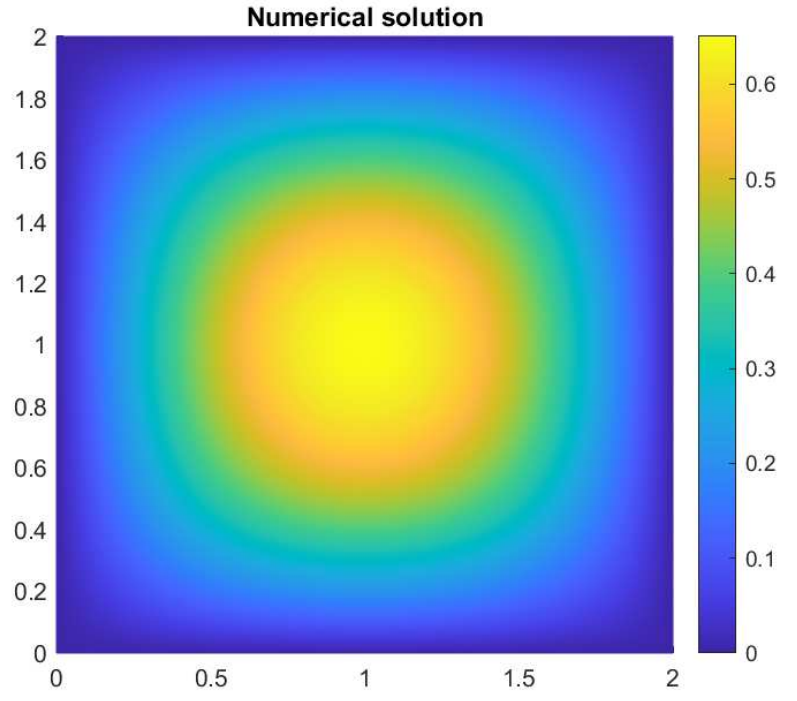}}
\subfigure[]{\includegraphics[width=0.255\textwidth]{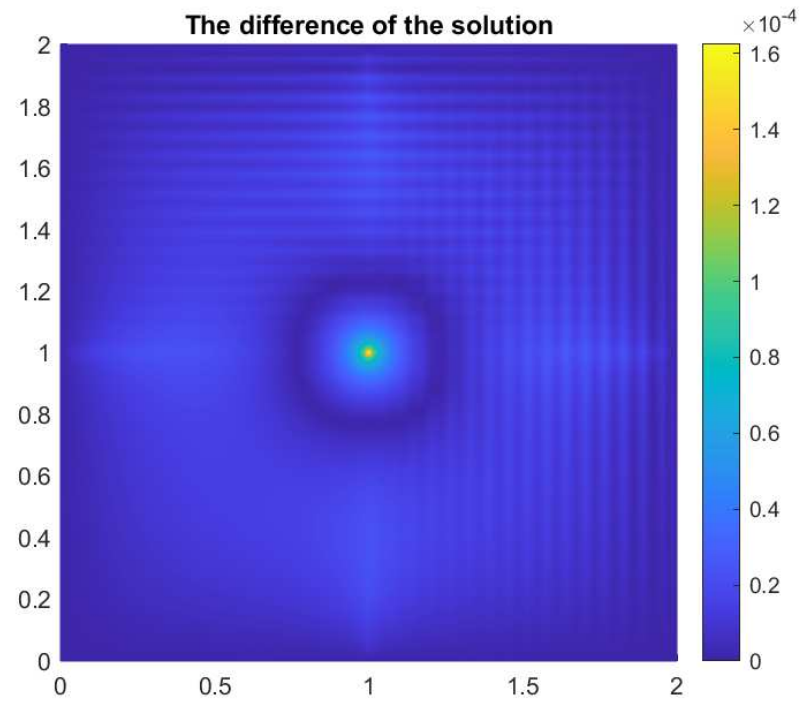}}
\caption{The square domain (Example \ref{P1S}): (a) the initial mesh; (b) the mesh after one refinement; (c): the solution $u_7$ from Algorithm \ref{femalg}; (d) $|u_R-u_7|$. }\label{Mesh_Init0}
\end{figure}

\begin{table}[!htbp]\tabcolsep0.03in
\caption{$H^1$ Convergence history  of the $P_1$ elements in Example \ref{P1S} on uniform meshes.}
\begin{tabular}[c]{|c|cccc|}
\hline
 & $j=3$ & $j=4$ & $j=5$ & $j=6$ \\
\hline
$\mathcal R$ for $u_j$ & 0.96 & 0.99 & 1.00 & 1.00 \\
\hline
$\mathcal R$ for $w_j$ & 0.96 & 0.99 & 1.00 & 1.00 \\
\hline
\end{tabular}\label{TabConRateS}
\end{table}

\end{example}

\begin{example}\label{P1h} (A non-convex domain).  In this example, we investigate the convergence of Algorithm \ref{femalg} by considering equation  (\ref{eqnbi}) with $f=1$ in an L-shaped domain $\Omega=\Omega_0 \setminus \Omega_1$ with $\Omega_0=(-2,2)^2$ and $\Omega_1=(0,2)\times (-2,0)$.
We use  the following cut-off function in the algorithm:
\begin{eqnarray*}
\eta(r; \tau, R)=
\left\{\begin{array}{ll}
0, & \text{if } r \geq R, \\
1, & \text{if } r \leq \tau R, \\
\frac{1}{2} - \frac{15}{16}\left( \frac{2r}{R(1-\tau)}-\frac{1+\tau}{1-\tau}\right) + \frac{5}{8}\left( \frac{2r}{R(1-\tau)}-\frac{1+\tau}{1-\tau}\right)^3 - \frac{3}{16}\left( \frac{2r}{R(1-\tau)}-\frac{1+\tau}{1-\tau}\right)^5 , & \text{otherwise,}
\end{array}\right.
\end{eqnarray*}
where $R=\frac{9}{5}, \tau = \frac{1}{8}$.


In the first test, we solve equation (\ref{eqnbi})  in the L-shaped domain using quasi-uniform meshes and compare the performances of  Algorithm \ref{femalg} and the  usual mixed finite element algorithm based on the  formulation (\ref{weaksys}). On $\maT_j$, we denote the numerical solutions  from Algorithm \ref{femalg} by $u^A_j$ and $w_j^A$, and denote the numerical solutions from the usual mixed finite element algorithm by $u^U_j$ and $w^U_j$.  The  initial mesh is shown in Figure \ref{Mesh_Init}(a).
In Table \ref{DiffErr}, we display the   errors ($u_R-u^U_j$ and $u_R-u^A_j$) in the $L^\infty$ norm between the finite element solutions  and the reference solution  $u_R$.
In addition, the differences $|u_R-u^U_7|$ and $|u_R-u^A_7|$ are also presented in Figures \ref{Mesh_Init}(d) and \ref{Mesh_Init}(f). From these results, we see that the solution $u_j^A$ from Algorithm \ref{femalg} converges to the actual solution, while the solution $u_j^U$ of the usual mixed finite element algorithm {does not converge to the solution of the biharmonic equation (\ref{eqnbi})} as the meshes are refined.  These observations are  closely aligned with our theoretical predictions in Remark \ref{rk35}. Namely, Algorithm \ref{femalg} gives rise to convergent numerical solutions in both convex and non-convex domains, while the usual mixed method is applicable  only for convex domains.

\begin{table}[!htbp]\tabcolsep0.03in
\caption{The $L^\infty$ error in the L-shaped domain  on quasi-uniform meshes.}
\begin{tabular}[c]{|c|c|c|c|c|}
\hline
\multirow{2}{*}{} & $j=3$ & $j=4$ & {$j=5$} & {$j=6$}  \\
\cline{2-5}
\hline
$\|u_R-u_j^U\|_{L^\infty(\Omega)}$  & 1.28014e-01 & 1.37318e-01 & 1.41309e-01 & 1.42525e-01 \\
\hline
$\|u_R-u_j^A\|_{L^\infty(\Omega)}$  &  1.58074e-02 & 7.84320e-03 & 3.20391e-03 & 1.20794e-03 \\
\cline{1-5}
\end{tabular}\label{DiffErr}
\end{table}

\begin{table}[!htbp]\tabcolsep0.03in
\caption{Numerical convergence rates $\mathcal R$ for $u^U_j$ and $w^U_j$ in the L-shaped domain.}
\begin{tabular}[c]{|c|ccccc|ccccc|}
\hline
\multirow{2}{*}{$\kappa\backslash j$}  & \multicolumn{5}{|c|}{$u_j^U$} & \multicolumn{5}{|c|}{$w_j^U$}  \\
\cline{2-11}
& $j=5$ & $j=6$ & $j=7$ & $j=8$ & $j=9$ & $j=5$ & $j=6$ & $j=7$ & $j=8$ & $j=9$\\
\hline
$\kappa=0.1$ & 0.95 & 0.98 & 0.99 & 1.00 & 1.00 & 0.95 & 0.98 & 0.99 & 1.00 & 1.00 \\
\hline
$\kappa=0.2$ & 0.96 & 0.99 & 1.00 & 1.00 & 1.00 & 0.96 & 0.99 & 0.99  & 1.00 & 1.00\\
\hline
$\kappa=0.3$ & 0.97 & 0.98 & 0.99 & 0.99 & 0.99 & 0.96 & 0.98 & 0.99 & 0.99  & 0.99 \\
\hline
$\kappa=0.4$ & 0.94 & 0.94 & 0.94 & 0.93 & 0.93 & 0.94 & 0.95 & 0.95  & 0.94 & 0.94 \\
\hline
$\kappa=0.5$ & 0.86 & 0.82 & 0.78 & 0.75 & 0.72 & 0.87 & 0.84 & 0.80  & 0.77 & 0.74 \\
\hline
\end{tabular}\label{TabConRateP10}
\end{table}

\begin{figure}[h]
\centering
\subfigure[]{\includegraphics[width=0.28\textwidth]{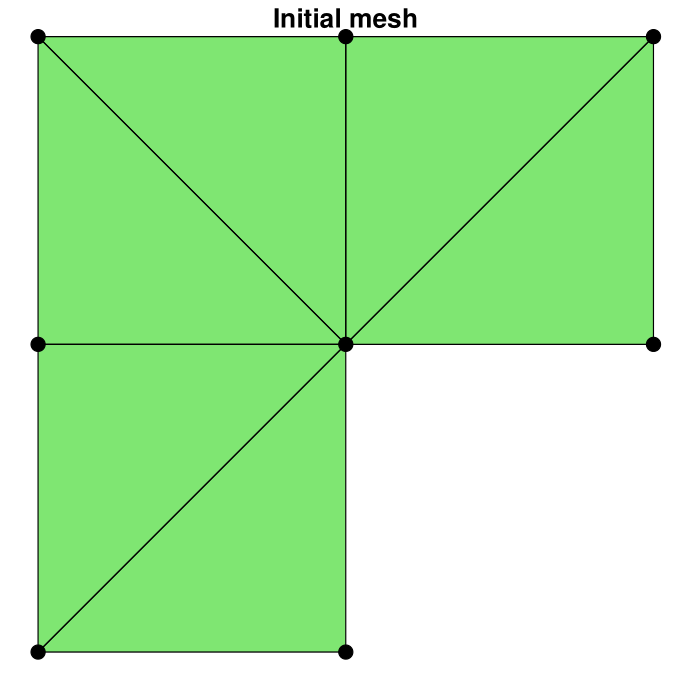}}
\subfigure[]{\includegraphics[width=0.27\textwidth]{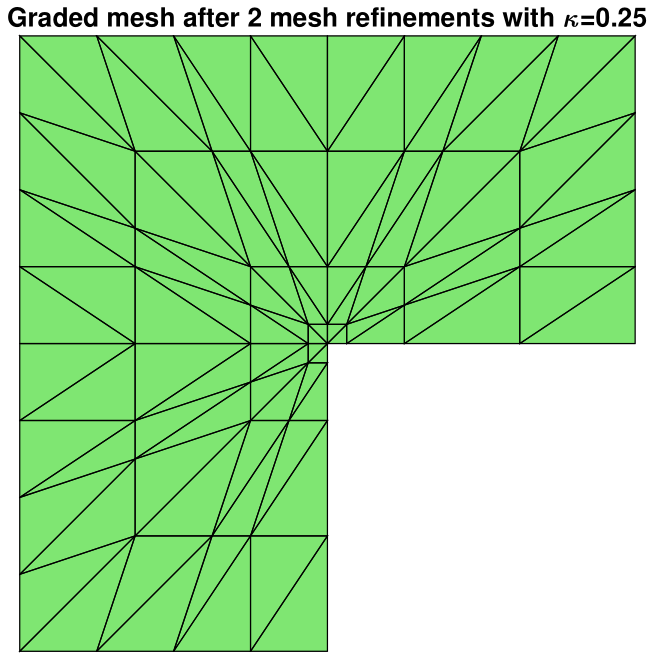}}
\subfigure[]{\includegraphics[width=0.32\textwidth]{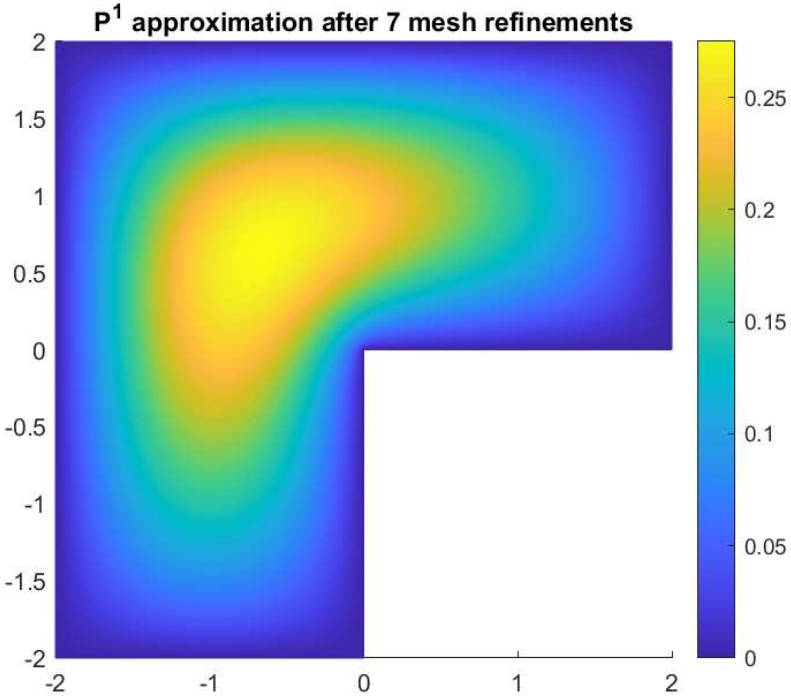}}
\subfigure[]{\includegraphics[width=0.30\textwidth]{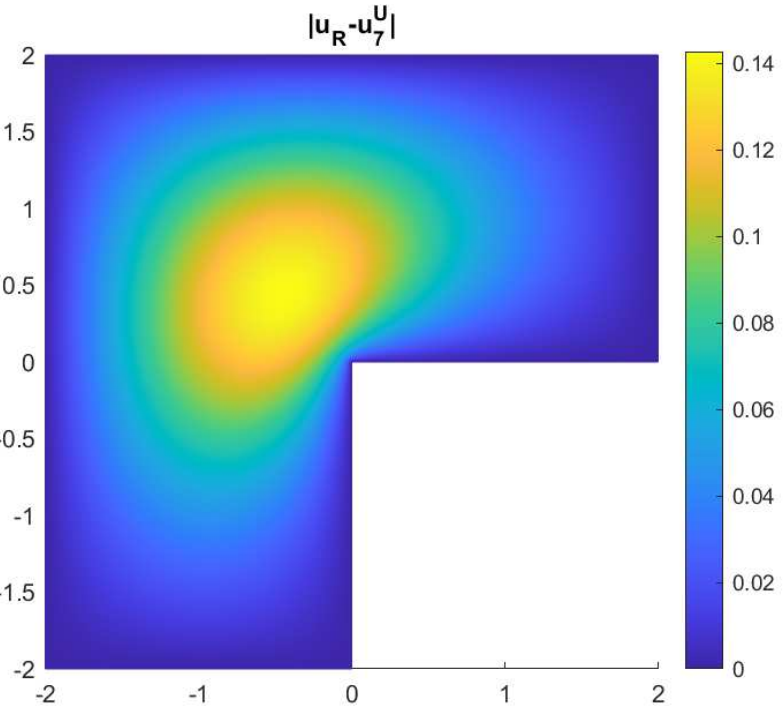}}
\subfigure[]{\includegraphics[width=0.30\textwidth]{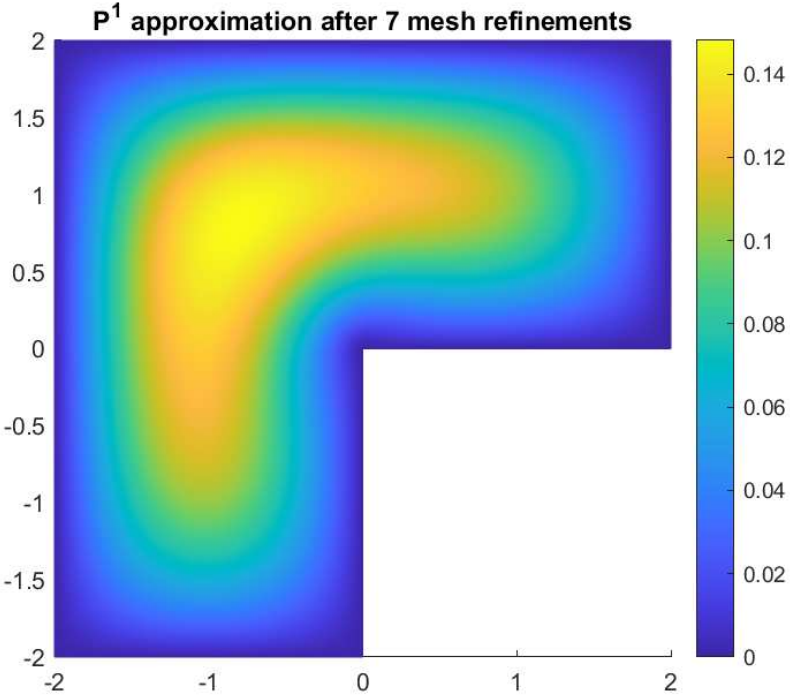}}
\subfigure[]{\includegraphics[width=0.30\textwidth]{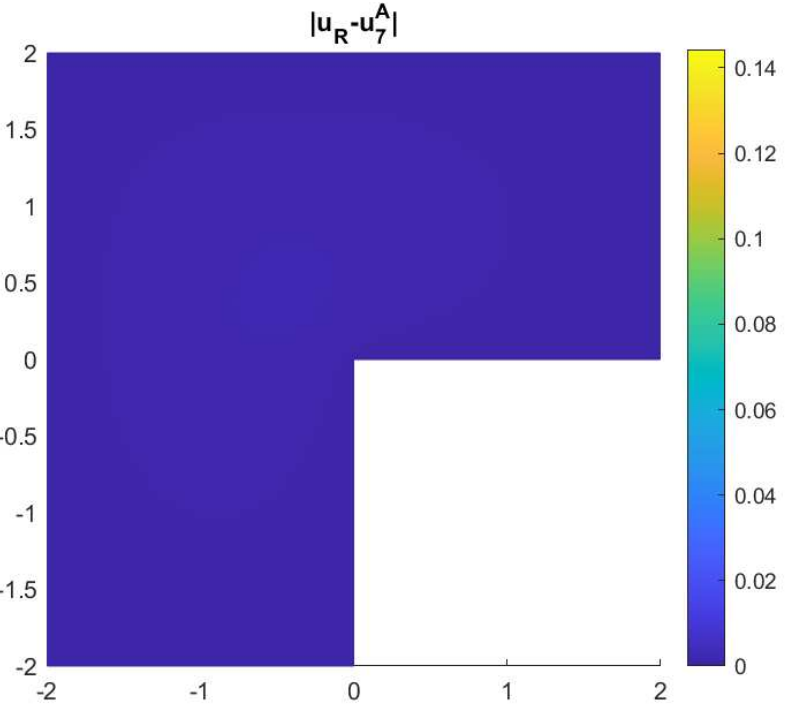}}
\caption{The L-shaped domain (Example \ref{P1h}):  (a) the initial mesh; (b) the graded mesh after two refinements; (c) the solution $u^U_7$ of the usual mixed method; (d) the difference $|u_R-u^U_7|$; (e) the solution $u^A_7$ from Algorithm \ref{femalg}; (f) the difference $|u_R-u^A_7|$.}\label{Mesh_Init}
\end{figure}
{
For the usual mixed finite element method, although the  solution $u^U_j$ does not converge to the solution of  (\ref{eqnbi}), we notice that both  $u^U_j$ and $w^U_j$, $j\geq 0$, are converging sequences.
The numerical convergence rate $\mathcal R$ (\ref{rate}) for  $u^U_j$ and $w^U_j$ on a sequence of graded meshes (including quasi-uniform meshes) is reported in Table \ref{TabConRateP10}. In the table, we observe that  $u^U_j$ and $w^U_j$ have similar convergence rates:  $\mathcal R<1$ on quasi-uniform meshes and on the graded meshes with $\kappa=0.4$; and $\mathcal R=1$ on graded meshes with $\kappa\leq 0.3$. These results indicate that the usual mixed finite element solution $u^U_j$ converges to the solution $\bar u$ of (\ref{eqn7}) in $H^1(\Omega)$. Recall however that $\bar u\neq u$ when the domain has reentrant corners.}

\begin{table}[!htbp]\tabcolsep0.03in
\caption{$H^1$ convergence history   in the L-shaped domain on graded meshes.}
\begin{tabular}[c]{|c|ccccc|ccccc|}
\hline
\multirow{2}{*}{$\kappa\backslash j$}  & \multicolumn{5}{|c|}{$u_j^A$} & \multicolumn{5}{|c|}{$w_j^A$}  \\
\cline{2-11}
& $j=5$ & $j=6$ & $j=7$ & $j=8$ & $j=9$ & $j=5$ & $j=6$ & $j=7$ & $j=8$ & $j=9$\\
\hline
$\kappa=0.1$ & 0.95 & 0.98 & 0.99 & 1.00 & 1.00 & 0.95 & 0.98 & 0.99 & 1.00 & 1.00 \\
\hline
$\kappa=0.2$ & 0.95 & 0.99 & 1.00 & 1.00 & 1.00 & 0.96 & 0.99 & 0.99  & 1.00 & 1.00\\
\hline
$\kappa=0.3$ & 0.97 & 0.99 & 1.00 & 1.00 & 1.00 & 0.96 & 0.98 & 0.99 & 0.99  & 0.99 \\
\hline
$\kappa=0.4$ & 0.98 & 0.99 & 1.00 & 1.00 & 1.00 & 0.94 & 0.95 & 0.95  & 0.94 & 0.94 \\
\hline
$\kappa=0.5$ & 0.97 & 0.99 & 0.99 & 1.00 & 1.00 & 0.87 & 0.84 & 0.80  & 0.77 & 0.74 \\
\hline
\end{tabular}\label{TabConRateP1}
\end{table}

In the {last} test,
we exam the convergence rates of  the finite element solution $u^A_j$ and the auxiliary finite element solution $w^A_j$ from Algorithm \ref{femalg} on a sequence of graded meshes (including quasi-uniform meshes).
The $H^1$ convergence rates (\ref{rate}) for the finite element solutions $u^A_j$ and $w^A_j$ are reported in Table \ref{TabConRateP1}.
For  $u_j^A$,  the optimal convergence rate ($\mathcal R=1$) is achieved on all  meshes with  $\kappa \in (0,0.5]$. For the auxiliary solution $w_j^A$, we observe that the convergence rate is not optimal on quasi-uniform meshes and on  the graded meshes with $\kappa=0.4$; and  the optimal convergence rate $\mathcal R=1$ is obtained on graded meshes when $\kappa\leq 0.3$. These numerical results justify the theory (Theorem \ref{thmuerr}, Theorem \ref{gthmuerr}, and Lemma \ref{glemwerrl2}) developed early in this paper. Namely, the numerical solution $u^A_j$ converges to $u$ in the optimal rate on quasi-uniform meshes and on graded meshes, while $w^A_j$ shall converge to $w$ in the optimal rate when $\kappa<2^{-\frac{\omega}{\pi}}=2^{-\frac{3}{2}}\approx 0.354$. For $\kappa>2^{-\frac{3}{2}}$ ($\kappa=0.4, 0.5$ in Table \ref{TabConRateP1}), $w^A_j$ shall converge to $w$ in a reduced rate due to the fact that $w$ is singular near the reentrant corner ($w\notin H^2(\Omega)$ and $w\in H^{\alpha}(\Omega)$ for $\alpha<1+\frac{2}{3}\approx 1.667$ see  (\ref{wh1err})).

\end{example}

\section*{Acknowledgments}
H. Li was supported in part by the National Science Foundation Grant DMS-1819041 and by the Wayne State University Faculty Competition for Postdoctoral Fellows Award. Z. Zhang was supported in part by the National Natural Science Foundation of China grants NSFC 11871092 and NASF U1930402.

\bibliography{LYZ20}

\def\cprime{$'$}
\begin{thebibliography}{10}

\bibitem{fenics}
M.~Aln{\ae}s, J.~Blechta, J.and~Hake, A.~Johansson, B.~Kehlet, A.~Logg,
  C.~Richardson, J.~Ring, M.~Rognes, and G.~Wells.
\newblock The {FEniCS} project version 1.5.
\newblock {\em Archive of Numerical Software}, 3(100), 2015.

\bibitem{NistorBZ052}
C.~B{\u{a}}cu{\c{t}}{\u{a}}, V.~Nistor, and L.T. Zikatanov.
\newblock Improving the rate of convergence of `high order finite elements' on
  polygons and domains with cusps.
\newblock {\em Numer. Math.}, 100(2):165--184, 2005.

\bibitem{BS02}
S.C. Brenner and L.R. Scott.
\newblock {\em The mathematical theory of finite element methods}, volume~15 of
  {\em Texts in Applied Mathematics}.
\newblock Springer-Verlag, New York, second edition, 2002.

\bibitem{Ciarlet78}
P.~Ciarlet.
\newblock {\em The Finite Element Method for Elliptic Problems}, volume~4 of
  {\em Studies in Mathematics and Its Applications}.
\newblock North-Holland, Amsterdam, 1978.

\bibitem{Dauge88}
M.~Dauge.
\newblock {\em Elliptic Boundary Value Problems on Corner Domains}, volume 1341
  of {\em Lecture Notes in Mathematics}.
\newblock Springer-Verlag, Berlin, 1988.

\bibitem{MR1422248}
P.~Destuynder and M.~Salaun.
\newblock {\em Mathematical analysis of thin plate models}, volume~24 of {\em
  Math\'{e}matiques \& Applications (Berlin) [Mathematics \& Applications]}.
\newblock Springer-Verlag, Berlin, 1996.

\bibitem{Evans}
L.~Evans.
\newblock {\em Partial differential equations}, volume~19 of {\em Graduate
  Studies in Mathematics}.
\newblock AMS, Rhode Island, 1998.

\bibitem{gerasimov2012}
A.~Gerasimov, T.and~Stylianou and G.~Sweers.
\newblock Corners give problems when decoupling fourth order equations into
  second order systems.
\newblock {\em SIAM Journal on Numerical Analysis}, 50(3):1604--1623, 2012.

\bibitem{gilbarg1983}
D.~Gilbarg and N.~S. Trudinger.
\newblock {\em Elliptic Partial Differential Equations of Second Order}.
\newblock Springer-Verlag, Berlin, 1983.

\bibitem{Grisvard1985}
P.~Grisvard.
\newblock {\em Elliptic Problems in Nonsmooth Domains}.
\newblock Pitman, Boston, 1985.

\bibitem{Grisvard1992}
P.~Grisvard.
\newblock {\em Singularities in Boundary Value Problems}, volume~22 of {\em
  Research Notes in Applied Mathematics}.
\newblock Springer-Verlag, New York, 1992.

\bibitem{Kondratiev67}
V.A. Kondrat{\cprime}ev.
\newblock Boundary value problems for elliptic equations in domains with
  conical or angular points.
\newblock {\em Trudy Moskov. Mat. Ob\v s\v c.}, 16:209--292, 1967.

\bibitem{LMN10}
H.~Li, A.~Mazzucato, and V.~Nistor.
\newblock Analysis of the finite element method for transmission/mixed boundary
  value problems on general polygonal domains.
\newblock {\em Electron. Trans. Numer. Anal.}, 37:41--69, 2010.

\bibitem{LN18}
H.~Li and S.~Nicaise.
\newblock Regularity and a priori error analysis on anisotropic meshes of a
  {D}irichlet problem in polyhedral domains.
\newblock {\em Numer. Math.}, 139(1):47--92, 2018.

\bibitem{LN09}
H.~Li and V.~Nistor.
\newblock Analysis of a modified {S}chr\"odinger operator in 2{D}: regularity,
  index, and {FEM}.
\newblock {\em J. Comput. Appl. Math.}, 224(1):320--338, 2009.

\bibitem{lions1972}
J.L. Lions and E.~Magenes.
\newblock {\em Non-Homogeneous Boundary Value Problems and Applications I,
  III}.
\newblock Springer-Verlag, 1972.

\bibitem{MR2270884}
S.A. Nazarov and G.~Kh. Svirs.
\newblock Boundary value problems for the biharmonic equation and the iterated
  {L}aplacian in a three-dimensional domain with an edge.
\newblock {\em Zap. Nauchn. Sem. S.-Peterburg. Otdel. Mat. Inst. Steklov.
  (POMI)}, 336(Kraev. Zadachi Mat. Fiz. i Smezh. Vopr. Teor. Funkts.
  37):153--198, 276--277, 2006.

\bibitem{nazarov2007}
S.A. Nazarov and G.~Sweers.
\newblock A hinged plate equation and iterated dirichlet laplace operator on
  domains with concave corners.
\newblock {\em Journal of Differential Equations}, 233(1):151--180, 2007.

\bibitem{MR2499118}
G.~Sweers.
\newblock A survey on boundary conditions for the biharmonic.
\newblock {\em Complex Var. Elliptic Equ.}, 54(2):79--93, 2009.

\bibitem{MR2479119}
S.~Zhang and Z.~Zhang.
\newblock Invalidity of decoupling a biharmonic equation to two {P}oisson
  equations on non-convex polygons.
\newblock {\em Int. J. Numer. Anal. Model.}, 5(1):73--76, 2008.

\end{thebibliography}

\bibliographystyle{plain}

\end{document}